\newtheorem{theorem}{Theorem}
\newtheorem{definition}{Definition}
\newtheorem{property}{Property}
\newtheorem{remark}{Remark}
\newtheorem{proof}{Proof}
\newtheorem{lemma}{Lemma}
\begin{document}

\begin{frontmatter}

\title{Raising the regularity of generalized Abel equations in fractional Sobolev spaces with homogeneous boundary conditions}
%\tnotetext[mytitlenote]{Fully documented templates are available in the elsarticle package on \href{http://www.ctan.org/tex-archive/macros/latex/contrib/elsarticle}{CTAN}.}

%% Group authors per affiliation:
\author{Yulong Li\fnref{myfootnote}}
\address{Science, Mathematics and Technology  Cluster,
	Singapore University of Technology and Design,
	8 Somapah Road, 487372, Singapore}
%\fntext[myfootnote]{Since 1880.}

%% or include affiliations in footnotes:
%\author[mymainaddress,mysecondaryaddress]{Elsevier Inc}
%\ead[url]{www.elsevier.com}

%\author[mysecondaryaddress]{Yulong Li\corref{mycorrespondingauthor}}
%\cortext[mycorrespondingauthor]{Corresponding author}
\ead{liyulong0807101@gmail.com}

%\address[mymainaddress]{1600 John F Kennedy Boulevard, Philadelphia}
%\address[mysecondaryaddress]{360 Park Avenue South, New York}

\begin{abstract}
The generalized (or coupled) Abel equations on the bounded interval have been well investigated in H$\ddot{\text{o}}$lderian  spaces that admit integrable singularities at the endpoints and relatively inadequate in other functional spaces. In recent years, such operators have appeared in BVPs of fractional-order differential equations such as fractional diffusion equations that are usually studied in the frame of fractional Sobolev spaces for weak solution and numerical approximation; and their analysis  plays the key role during the process of converting weak solutions to the true solutions.  

 This article develops the mapping properties of generalized Abel operators $\alpha {_aD_x^{-s}}+\beta {_xD_b^{-s}}$ in fractional Sobolev spaces, where $0<\alpha,\beta$, $\alpha+\beta=1$, $ 0<s<1$ and $ {_aD_x^{-s}}$, $ {_xD_b^{-s}}$ are fractional Riemann-Liouville integrals. It is mainly concerned with the regularity property of $(\alpha {_aD_x^{-s}}+\beta {_xD_b^{-s}})u=f$ by taking into account homogeneous boundary conditions. Namely, we investigate the regularity behavior of $u(x)$ while letting $f(x)$ become smoother and imposing homogeneous boundary restrictions $u(a)=u(b)=0$.
\end{abstract}

\begin{keyword}
Riemann-Liouville,  generalized Abel  equation,    regularity,  double-sided,  integral equation.
\MSC[2010] 45A05\sep  45E10 \sep 45P05
\end{keyword}

\end{frontmatter}

%\linenumbers

\section{Background and motivation}\label{sec:introduction}
%%%%%%%%%%%%%%%%%%%%%%
The equation
\begin{equation}\label{equ:abelequations}
\alpha(x) \int_a^x \frac{u(t)dt}{(x-t)^{\mu}}+\beta(x) \int_x^b \frac{u(t)dt}{(t-x)^{\mu}}=f(x), a<x<b,
\end{equation}
with $0<\mu<1$, is known as the generalized Abel equation with exterior coefficients on the interval, which has appeared in various fractional-order differential equations (see e.g. model \eqref{equ:fractionaldiffusion} below). Various theoretical aspects of problem  \eqref{equ:abelequations} and its generalizations to  the curve in complex plane have been investigated by many mathematicians. A thorough historical record of results on the topic of \eqref{equ:abelequations} are collected in chapter 6, 7 of monograph \cite{MR1347689}, the first survey article in \cite{almeida2013advances} and, especially, the references and remarks listed in there. For a general treatment of problem  \eqref{equ:abelequations} in connection with the Hilbert boundary value problem, readers can refer to \cite{gakhov2014boundary}, \cite{muskhelishvili2008singular}.

Among those results, in this work, we mainly concern the solvability and mapping properties  in fractional Sobolev spaces for $\alpha(x), \beta(x)$ being constants, i.e. 

\begin{equation}\label{equ:CAbelEequa}
Au:=\alpha \int_a^x \frac{u(t)dt}{(x-t)^{\mu}}+\beta \int_x^b \frac{u(t)dt}{(t-x)^{\mu}}=f(x), a<x<b.
\end{equation}

$\eqref{equ:CAbelEequa}$ can be solved for $u(x)$ in different closed  forms.  Wolfersdorf in \cite{wolfersdorf1969losung} (1969) provided an expression of $u(x)$ in terms of the kernal involving hypergeometric function and other similar but different forms appeared in later papers after this work. In 1978, Samko provided another interesting method in his PhD thesis \cite{samkothesis} (Russian) which opens a wider door (also presented in English version book \cite{MR1347689}, section 30). He constructed the explicit expression of solution $u(x)$ by using singular integral operators (see Def. \ref{def:integral operator}) and  sought the solutions in the H$\ddot{\text{o}}$lderian  space that admits integrable singularities at the boundary, namely $H^*(\Omega)$ (see sequel notations in \ref{Notations}). And in this setting it follows that the operator $A$ maps the space $H^*(\Omega)$ one-to-one and onto  a better space $H^*_{1-\mu}(\Omega)$ consisting of all functions of the form ${_aD_x^{-(1-\mu)}}\psi$ (or symmetrically ${_xD_b^{-(1-\mu)}}\psi'$) with $\psi$ (or $\psi'$) $\in H^*(\Omega)$. These fundamental results are summarized in Theorem 13.14 and Theorem 30.7 in \cite{MR1347689} (also presented in sequel Section \ref{sec:FRLO}) and will be the basement of this work.

However, from a practical point of view, these results are still inadequate for the  purpose of the analysis and application of fractional-order differential equations involving the generalized Abel operator $A$. The reason is illustrated as follows.

In the modern theory of integer-order elliptic equations, the usual strategy for studying the true solutions is two steps: first establish the weak solution in Sobolev spaces and then raise the regularity of  weak solution to recover the classic solution. It is not surprising that this idea can be naturally extended to the study of fractional-order elliptic equations, which consists of a large class of important models in physics and other scientific areas (good references for applications of fractional elliptic and general fractional differential equations can be referred to  multi-volume Handbook of Fractional Calculus with Applications:  Vol 1 - Vol 8, and books such as \cite{kilbas2006theory}\cite{sabatier2007advances}\cite{tarasov2011fractional}\cite{kulish2002application} etc. and recent articles in journals such as FCAA, PFDA, JFCA and EJDE). Let us take the following boundary value problem of  fractional diffusion equation as an example (this model is proposed and discussed in Ervin and his collaborators' series work \cite{ernmpde06}\cite{MR3802435}\cite{zheng2020wellposedness}\cite{zheng2020optimal} and  a generalization to higher dimension in \cite{ervin2007variational}):
\begin{equation}\label{equ:fractionaldiffusion}
\begin{cases}
[L(u)](x)= f(x),\, x\in \Omega=(a,b),\\
u(a)=u(b)=0,\\
[L(u)](x) := -Dk(x)(\alpha\, {_aD_x^{-(1-\mu)}}+\beta\,{_xD_b^{-(1-\mu)}})Du\\
\qquad \qquad+p(x)Du+q(x)u(x),\\
0<\alpha, \beta <1, \alpha+ \beta=1, 0<\mu<1.
\end{cases}
\end{equation}
By imposing suitable conditions on the coefficients and $f(x)$, one can first establish the existence of weak solution of \eqref{equ:fractionaldiffusion} in fractional Sobolev spaces and secondly raise the regularity of weak solution  to recover the true solution. The first stage is analogous to the classic case once constructing some intermediate functional spaces by using Riemann-Liouville derivatives to connect with usual fractional Sobolev spaces, and has been resolved in \cite{ernmpde06} (2006); the second stage, however, is challenging and the most recent results on this topic are presented in published work \cite{MR3802435} (2018), dissertation \cite{MR4024334} (2019) (chapter 6) and  manuscripts \cite{ervin2019regularity} (2019), \cite{li2020skewed} (2020). The challenging during the phase of raising regularity is attributed to two aspects: 
\begin{enumerate}
	\item 
	Raising the regularity of weak solution of \eqref{equ:fractionaldiffusion} essentially amounts to raising the regularity of $u(x)$ in problem \eqref{equ:CAbelEequa} in fractional Sobolev spaces (since the weak solution of fractional differential equations are sought in fractional Sobolev spaces), instead of the usual space $H^*(\Omega)$ where \eqref{equ:CAbelEequa} has been well studied.
	\item Meanwhile, we need to take into account the homogeneous boundary restriction, namely $u(a)=u(b)=0$ in problem \eqref{equ:CAbelEequa}.
\end{enumerate}

Roughly speaking,  the restoration of true solution of model \eqref{equ:fractionaldiffusion} is intertwined with problem \eqref{equ:CAbelEequa} and can be casted into the scenario: Assume $u$ belongs to the fractional Sobolev space $\widehat{H}^{(1+\mu)/2}_0(\Omega)$ and satisfies 
\begin{equation}\label{equ:thelastone}
\alpha \int_a^x \frac{u(t)dt}{(x-t)^{\mu}}+\beta \int_x^b \frac{u(t)dt}{(t-x)^{\mu}}=f(x), a<x<b.
\end{equation}
Let right-hand side $f(x)$ in \eqref{equ:thelastone} become smoother and smoother, one needs to show that $u(x)$ becomes smoother as well (this corresponds to the phase of the weak solution of \eqref{equ:fractionaldiffusion} recovering to the true solution).   Rephrasing in a little bit more precise way, let $f(x)$ is of the Riemann-Liouville fractional integral form ${_aD_x^{-s_1}}\psi_1$ with $\psi_1\in H^*(\Omega), s_1>0$, one needs to prove that the solution $u(x)$ has also a representation ${_aD_x^{-s_2}}\psi_2$ with another $\psi_2\in H^*(\Omega), s_2>0$ and that  $s_2$ increases as  $s_1$ increases.

This is the motivation and requirement of the  work and will be successfully answered in this paper.

Since for $0<s_1<1$, the consequence is well known and is implied by Theorem 13.14  in \cite{MR1347689}, we are going to develop the results only for the case $1\leq s_1$. Note for $1\leq s_1$,   $f(x)={_aD_x^{-s_1}}\psi_1$ can actually be equivalently rewritten as $f(x)={_xD_b^{-s_1}}\psi_1' + c$  for another $\psi_1' \in H^*(\Omega)$ and a certain constant $c$, which suggests that there is an intrinsic constant on the right-hand side \eqref{equ:thelastone} and usually $c$ is non-zero. (That is why we formulate the specific form in right-hand side of \eqref{equ:firstone} below, and moreover, that form is necessary and sufficient for the application of fractional diffusion equations).

Our analysis is mainly based on Samko's results that obtained in space $ H^*(\Omega)$ and we intend to connect them to  the fractional Sobolev space $\widehat{H}^{(1+\mu)/2}_0(\Omega)$. The main theorem that will be proved is  the following.

\begin{theorem}\label{theoremmain}
	Given  $0<\sigma<1$,  let $c$ be a constant, $\psi(x)\in \widehat{H}^{(1+\mu)/2}_0(\Omega)$ and $f(x)\in H(\overline{\Omega}) \cap H^*_\sigma(\Omega)$. If 
	\begin{equation}\label{equ:firstone}
	\alpha _aD_x^{-(1-\mu)}\psi+\beta {_xD_b^{-(1-\mu)}}\psi \overset{a.e.}{=}{_aD_x^{-1}}f +c, x\in\Omega,
	\end{equation}
	then the solution $\psi(x)$ has a representation
	\begin{equation}
	\psi(x)={_aD_x^{-(\mu+\sigma)}}K_\sigma,
	\end{equation}
	where function $K_\sigma(x)$ belongs to $H^*(\Omega)$ ($K_\sigma$ depends on $\sigma$).
\end{theorem}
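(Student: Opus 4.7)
The strategy is to first reduce to Samko's framework by rewriting the right-hand side of \eqref{equ:firstone} in the image form $H^*_{1-\mu}(\Omega)$, obtain the preliminary conclusion $\psi\in H^*(\Omega)$ via Theorem 13.14/30.7, and then bootstrap to the finer representation $\psi={_aD_x^{-(\mu+\sigma)}}K_\sigma$. Concretely, since $f\in H^*_\sigma(\Omega)$ I would write $f={_aD_x^{-\sigma}}\phi$ with $\phi\in H^*(\Omega)$, then use the semigroup property of left Riemann--Liouville integrals to replace ${_aD_x^{-1}}f$ by ${_aD_x^{-(1+\sigma)}}\phi$. The equation then reads
\[
\alpha\,{_aD_x^{-(1-\mu)}}\psi+\beta\,{_xD_b^{-(1-\mu)}}\psi={_aD_x^{-(1+\sigma)}}\phi+c.
\]

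Next I would absorb both summands of the right-hand side into a single ${_aD_x^{-(1-\mu)}}$-image. By the semigroup ${_aD_x^{-(1+\sigma)}}\phi={_aD_x^{-(1-\mu)}}\bigl[{_aD_x^{-(\mu+\sigma)}}\phi\bigr]$, and by the standard formula ${_aD_x^{-\alpha}}(x-a)^{\beta-1}=\frac{\Gamma(\beta)}{\Gamma(\alpha+\beta)}(x-a)^{\alpha+\beta-1}$ one checks $c={_aD_x^{-(1-\mu)}}\bigl[c(x-a)^{\mu-1}/\Gamma(\mu)\bigr]$. Both bracketed functions lie in $H^*(\Omega)$ --- the first because fractional integration is smoothing, the second because $(x-a)^{\mu-1}$ is an allowed integrable endpoint singularity. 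Hence the right-hand side belongs to $H^*_{1-\mu}(\Omega)$, and Samko's Theorem 30.7 provides a unique solution $\widetilde{\psi}\in H^*(\Omega)$. Combining the embedding $\widehat{H}^{(1+\mu)/2}_0(\Omega)\hookrightarrow H^*(\Omega)$ with injectivity of $A$, the given Sobolev solution must coincide with $\widetilde{\psi}$, giving a first regularity gain $\psi\in H^*(\Omega)$.

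For the promotion from $\psi\in H^*(\Omega)$ to $\psi={_aD_x^{-(\mu+\sigma)}}K_\sigma$, I would use Samko's explicit inversion formula for $A^{-1}$, which is built from a weighted Cauchy singular integral and effectively differentiates by order $1-\mu$. Applying this formula to the explicit decomposition of the right-hand side, the left summand $_aD_x^{-(1+\sigma)}\phi$ yields, by semigroup, an outer factor ${_aD_x^{-(\mu+\sigma)}}$ acting on a bounded image of $\phi$, and the constant summand $c$ yields a term that --- after the inversion --- also factors as ${_aD_x^{-(\mu+\sigma)}}$ applied to an $H^*(\Omega)$ function built from the boundary power $(x-a)^{\mu-1}$ (and its mirror at $b$). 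Collecting the two contributions gives the desired $K_\sigma\in H^*(\Omega)$ with $\psi={_aD_x^{-(\mu+\sigma)}}K_\sigma$.

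The main obstacle will be the cross-term ${_xD_b^{-(1-\mu)}}\,{_aD_x^{-(\mu+\sigma)}}$ that appears when one substitutes the ansatz directly into the equation: left- and right-sided Riemann--Liouville integrals do not share a semigroup law, and their composition must be expanded through a hypergeometric kernel or, preferably, circumvented entirely by working through Samko's inversion operator rather than direct substitution. A second technical hurdle is verifying that the constant-$c$ contribution, whose preimage under $A^{-1}$ carries endpoint singularities of type $(x-a)^{\mu-1}$ and $(b-x)^{\mu-1}$, produces a genuinely $H^*(\Omega)$ function $K_\sigma$; here the homogeneous condition $\psi(a)=\psi(b)=0$ inherited from $\widehat{H}^{(1+\mu)/2}_0(\Omega)$, together with the regularity $f\in H(\overline{\Omega})$, will be essential to kill unwanted non-integrable boundary contributions and justify the fractional integration-by-parts manipulations needed to commute the outer ${_aD_x^{-(\mu+\sigma)}}$ past the singular integral operator.
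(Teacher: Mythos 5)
Your first stage --- rewriting ${_aD_x^{-1}}f+c$ as ${_aD_x^{-(1-\mu)}}\bigl[{_aD_x^{-(\mu+\sigma)}}\phi + c(x-a)^{\mu-1}/\Gamma(\mu)\bigr]$, invoking Samko's Theorem 30.7 to get a unique $\widetilde{\psi}\in H^*(\Omega)$, and identifying $\psi$ with $\widetilde{\psi}$ by injectivity --- is sound, and is arguably a cleaner route to the identification step than the paper's (the paper instead differentiates the equation, converts it into a dominant singular integral equation for $r_b^\mu\,{_aD_x^{\mu}}\psi$, and kills the homogeneous-solution constants $C_2,C_3$ by boundary-behaviour arguments that use $\psi\in\widehat{H}_0^{(1+\mu)/2}(\Omega)$ and an $L^p$ estimate near $x=b$). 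But that stage only delivers $\psi\in H^*(\Omega)$, which is far short of the conclusion.

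The genuine gap is the second stage, which is the entire content of the theorem, and your proposal names the obstacle without resolving it. The paper's mechanism is concrete: Lemma~\ref{lem-raising ragularity-1} produces \emph{four equivalent explicit representations} of $J={_aD_x^{\mu}}\psi$ of the form $\frac{Af}{A^2+B^2}-\frac{B}{\pi(A^2+B^2)}S_{\nu_a,\nu_b}f$ with $\nu_a\in\{1-\frac{\theta}{2\pi},\,-\frac{\theta}{2\pi}\}$ and $\nu_b\in\{\frac{\theta}{2\pi}-\mu,\,\frac{\theta}{2\pi}-\mu-1\}$; then Property~\ref{pro-WeightedMapping} (Samko's Lemma 30.2: $S_{\nu_a,\nu_b}$ maps $H^*_\sigma(\Omega)$ into itself precisely when $\sigma-1<\nu_a\le\sigma$ and $\sigma-1<\nu_b\le\sigma$) is applied, and --- crucially --- for a given $\sigma$ only one of the four weight pairs satisfies these inequalities, so a four-way case analysis on the position of $\sigma$ relative to $\frac{\theta}{2\pi}$ and $\frac{\theta}{2\pi}-\mu$ selects the usable representation. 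This yields $J\in H^*_\sigma(\Omega)={_aD_x^{-\sigma}}(H^*(\Omega))$, hence $J={_aD_x^{-\sigma}}K_\sigma$ and $\psi={_aD_x^{-\mu}}J={_aD_x^{-(\mu+\sigma)}}K_\sigma$ by the semigroup law. Your sketch replaces all of this with ``the left summand yields, by semigroup, an outer factor ${_aD_x^{-(\mu+\sigma)}}$'' --- but the inversion operator is a weighted Cauchy singular integral that does not commute with ${_aD_x^{-(\mu+\sigma)}}$, and no semigroup identity produces that outer factor; the needed fact is precisely that weighted singular operators with suitably chosen weights preserve the image spaces $H^*_\sigma$, and without it the bootstrap does not close. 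A secondary omission: you work with the inversion formula for $\psi$ itself, whereas the promotion argument needs a singular-integral formula for ${_aD_x^{\mu}}\psi$ in terms of $f$ (not of ${_aD_x^{-1}}f+c$); extracting the latter from the former again requires differentiating through the weighted singular integral, which is why the paper differentiates the equation \emph{before} inverting.
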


It is worth noting that the assumption on $\psi$ and $f$ made in Theorem~\ref{theoremmain} are usually automatically satisfied for free, during applications of  Theorem~\ref{theoremmain} to fractional differential equations. Indeed, $\psi(x)\in \widehat{H}^{(1+\mu)/2}_0(\Omega)$ comes from the existence of weak solution of the corresponding differential equations and $f(x)\in H(\overline{\Omega}) \cap H^*_\sigma(\Omega)$ will be  implied by the smoothness of coefficients. 	Therefore, the results of this work have direct applications in fractional-order elliptic equations and provide  essential techniques in the process of raising the regularity of weak solutions to recover classic solutions. A more detailed application and relationship between \eqref{equ:fractionaldiffusion} and \eqref{equ:thelastone} can be found in manuscript \cite{li2020skewed}.

The material is organized as follows.
\begin{itemize}
	\item  Section \ref{Notations} is about convention and notation.
	\item   Section \ref{sec:FRLO} gathers and lists necessary definitions and associated properties, most of which have been given specific citation information for readers' convenience. 
	\item Main results are established in Section \ref{sec:raisingtheregularity}, including two lemmas and one theorem.
\end{itemize}

\section{Convention and notation}\label{Notations}

Convention
\begin{itemize}
	\item    $\Omega=(a,b), \overline{\Omega}=[a,b]$ and $-\infty <a,b<\infty$, whenever they appear throughout the material.
	\item We shall often not distinguish $``="$ at every point from $``="$ almost every point  in those equations when there is no chance of misunderstanding. On some occasions, the  notation $\overset{a.e.}{=}$  will be used  for  emphasizing the validity for ``almost everywhere"  to draw the reader's attention for those cases that are of importance.
	\item  Whenever we deal with a function $f(x)$ belonging to Sobolev or $L^p$ spaces, it is implicitly assumed that $f$ denotes a suitable representative of the equivalence classes, unless otherwise specified.
	\item All the functions are default to be real-valued, and all the constants that will appear in different contexts will be assumed to be real constants.
\end{itemize}

Notation
\begin{itemize}
	\item $H^\lambda(\overline{\Omega})$: H$\ddot{\text{o}}$lderian  space ($\lambda>0$).
	\vspace{0.2cm}
	\item $H(\overline{\Omega}):=\underset{\lambda>0}{\bigcup}H^\lambda(\overline{\Omega})$.
	\vspace{0.2cm}
	\item $H^\lambda_0(\overline{\Omega}):=\{f:f(x)\in H^\lambda(\overline{\Omega}), f(a)=f(b)=0\}$.
	\vspace{0.2cm}
	\item $H^\lambda(\rho, \overline{\Omega})$: weighted H$\ddot{\text{o}}$lderian  space, ($\rho(x)=\prod_{k=1}^{n}|x-x_k|^{\mu_k}, x_k, x\in \overline{\Omega} $, $n$ is a positive integer).
	\vspace{0.2cm}
	\item  $H^\lambda_0(\rho, \overline{\Omega}):=\{f: f(x)\in H^\lambda(\rho, \overline{\Omega}), \rho(x_k) f(x_k)=0, k=1,\cdots, n\}$.
	\vspace{0.2 cm}
	\item  
	$
	H_0^\lambda(\epsilon_1,\epsilon_2):=\{f:f(x)=\frac{g(x)}{(x-a)^{1-\epsilon_1}(b-x)^{1-\epsilon_2}}, g(x)\in H^\lambda_0(\overline{\Omega})\}.
	$
	\vspace{0.1cm}
	\item $
	H^*(\Omega):=\underset{\underset{0<\lambda\leq 1}{0<\epsilon_1,\epsilon_2}}{\cup} H_0^\lambda(\epsilon_1,\epsilon_2).
	$
	\vspace{0.2cm}
	\item  $
	H^*_\sigma(\Omega):=\underset{\underset{\sigma<\lambda\leq 1}{0<\epsilon_1,\epsilon_2}}{\cup} H_0^\lambda(\epsilon_1,\epsilon_2).
	$
	\vspace{0.2cm}
	\item ${_a}D_{x}^{-\sigma}$, ${_x}D_{b}^{-\sigma}$, $\boldsymbol{D}^{-\sigma}$ and $\boldsymbol{D}^{-\sigma * }$ represent fractional integrals if $\sigma>0$, identity operators if $\sigma=0$ and fractional derivatives if $\sigma<0$ (see specific definitions in Section \ref{sec:FRLO}).
	\vspace{0.1cm}
	\item $AC(\overline{\Omega})$: the set of absolutely continuous functions on $\overline{\Omega}$.
	\item  $C(G)$: the set of all continuous functions on a  set $G$.
	\item $C^n(G):=\{f: f^{(n)}(x)\in C(G)\}$.
	\item Both $Df$ and $f'$ represent the usual derivative of a function $f$.
	\item $(f , g)_\Omega$ and $(f ,g )_\mathbb{R}$ denote the  integrals $\int_\Omega fg$ and $\int_\mathbb{R} fg$, respectively.
	\item $C_0^\infty(\Omega)$ consists of all the infinitely  differentiable functions on $\Omega$ and with compact support in $\Omega$.
	%\item Given $w:\mathbb{R} \rightarrow \mathbb{R}$, $[\mathcal{F}(w)](\xi) :=\int_{\mathbb{R}} e^{-2\pi i x\xi}w(x) \, {\rm d}x$, for $\xi \in \mathbb{R}$, denotes the Fourier Transform of $w$.\colorbox{red}{do we need this?}
\end{itemize}

\section{Prerequisite Knowledge}\label{sec:FRLO}
%%%%%%%%%%%%%%%%%%%%%%%%%%%%%%%%
Necessary preliminaries that will be of use are presented first, including definitions and associated properties. These definitions and properties are known and most of them are quoted from the literature directly. 
Properties given in this section may not be in the strongest forms, but they are adequate for our purpose.

%
%---------------------------------------------------------------------------------------------
\subsection{Riemann-Liouville integrals and their properties}
\label{ssec:FRLI}
%---------------------------------------------------------------------------------------------
\begin{definition} \label{def:RLI}
	Let $w:(c,d) \rightarrow \mathbb{R}, (c,d) \subset \mathbb{R}$ and  $\sigma >0$. The left and right Riemann-Liouville fractional integrals of order $\sigma$ are, formally respectively, defined as
	\begin{align}
		({_a}D_{x}^{-\sigma}w)(x) &:= \dfrac{1}{\Gamma(\sigma)}\int_{a}^{x}(x-s)^{\sigma -1}w(s) \, {\rm d}s, \label{eq:LRLI}\\
		({_x}D_{b}^{-\sigma} w)(x) &:= \dfrac{1}{\Gamma(\sigma)}\int_{x}^{b}(s-x)^{\sigma-1}w(s) \, {\rm d}s,  \label{eq:RRLI}
	\end{align}
	where $\Gamma(\sigma)$ is  Gamma function. For convenience, when $c=-\infty$ or $d=\infty$ we set
	\begin{equation}\label{eq:LRRLI}
	(\boldsymbol{D}^{-\sigma}w)(x) :={_{-\infty}}D_{x}^{-\sigma} w \text{ and }
	(\boldsymbol{D}^{-\sigma * }w)(x) :={_{x}}D_{\infty}^{-\sigma} w. 
	\end{equation}
\end{definition}
In particular, if $\sigma=0$, ${_a}D_{x}^{-\sigma}$, ${_x}D_{b}^{-\sigma}$, $\boldsymbol{D}^{-\sigma}$ and $\boldsymbol{D}^{-\sigma * }$ are regarded as identity operators. 
\begin{property}[\cite{MR1347689}, eq. (2.19), p. 34]\label{p-reflection}
	Let $\sigma>0$ and $(Qf)(x)=f(a+b-x)$, then the following operators are reflective:
	\begin{equation}
	Q{_aD_x^{-\sigma}}Q={_xD_b^{-\sigma}}.
	\end{equation}
\end{property}
\begin{property}\label{pro-mappings}
	If $0<\sigma<1$, $1<p<1/\sigma$, then the fractional operators ${_aD_x^{-\sigma}}$, ${_xD_b^{-\sigma}}$ are bounded from $L^p(\Omega)$ into $L^q(\Omega)$ with $q=\frac{p}{1-\sigma p}$.
\end{property}
\begin{property}\label{pro:MappingIntoHolderianFromLp}
	If $0<\frac{1}{p}<\sigma<1+\frac{1}{p}$, fractional operators ${_aD_x^{-\sigma}}$ and ${_xD^{-\sigma}_b}$  map the space $L^p(\Omega)$ into the H$\ddot{\text{o}}$lderian  space $H^{\sigma-1/p}(\overline{\Omega})$.
\end{property}
\begin{remark}
	Property~\ref{pro-mappings} is a combination of Theorem 3.5 (\cite{MR1347689}, p. 66) and Property~\ref{p-reflection},  and Property~\ref{pro:MappingIntoHolderianFromLp} is a combination of Corollary of Theorem 3.6 (\cite{MR1347689}, p. 69) and Property~\ref{p-reflection}.
\end{remark}
\subsection{Riemann-Liouville derivatives and their properties}
\begin{definition}\label{def:RLD}
	Let $w:(c,d) \rightarrow \mathbb{R}, (c,d) \subset \mathbb{R}$ and  $\sigma >0$. Assume  $n$ is the smallest integer greater than $\sigma$ (i.e., $n-1 \leq \sigma < n$). The left and right Riemann-Liouville fractional derivatives of order $\sigma$ are, formally respectively, defined as
	\[
	({_a}D_x^{\sigma}w)(x) := \frac{{\rm d}^n}{{\rm d}x^n} {_a}D_x^{\sigma-n} w  \text{ and }~
	({_x}D_b^{\sigma}w)(x) := (-1)^n \frac{{\rm d}^n}{{\rm d} x^n} {_x}D_b^{\sigma-n} w.
	\]
	For ease of notation, when $c=-\infty$ or $d=\infty$ we set
	\begin{equation}
	(\boldsymbol{D}^{\sigma} w )(x)= {_{-\infty}}D_{x}^{\mu}w \text{ and }
	(\boldsymbol{D}^{\sigma*}w)(x) =  {_{x}}D^{\mu}_{\infty}w .\label{def:Infty}
	\end{equation}
\end{definition}
\subsection{Functional spaces $H^*(\Omega)$, $H^*_\sigma(\Omega)$ and  properties}\label{Sec-IFS}
We list several mapping properties related to $H^*(\Omega)$ and $H^*_\sigma(\Omega)$, which play an important role in connecting the whole analysis of  this work.
\begin{property}[\cite{MR1347689}, Lemma 30.2, p. 618]\label{pro-WeightedMapping}
	Let $0<\sigma<1$, the weighted singular operator
	\begin{equation}
	(S_{\nu_a,\nu_b} f)(x)=\frac{1}{\pi}\int_a^b\left(\frac{x-a}{t-a}\right)^{\nu_a}\left(\frac{b-x}{b-t}\right)^{\nu_b}\frac{f(t)}{t-x}dt
	\end{equation}
	maps the space $H^*_\sigma(\Omega)$ into itself provided that $\sigma-1<\nu_a\leq  \sigma$ and $\sigma-1<\nu_b\leq \sigma$.
\end{property}
\begin{property}[\cite{MR1347689}, Theorem 13.14, p. 248]\label{pro-correspondence-1}
	Let $0<\sigma<1$, the fractional integration operators ${_aD_x^{-\sigma}}$ and ${_xD_b^{-\sigma}}$ map the space $H^*(\Omega)$ one-to-one and onto the space $H^*_\sigma(\Omega)$, respectively. Consequently, ${_aD_x^{-\sigma}}(H^*(\Omega))={_xD_b^{-\sigma}}(H^*(\Omega))$.
\end{property}
\begin{property}\label{Existence-Abel}
	Let $0<\sigma<1$, $\gamma_1, \gamma_2>0$, $\gamma_1{_aD_x^{-\sigma}}+\gamma_2{_xD_b^{-\sigma}}$ maps $H^*(\Omega)$ one-to-one and onto the space $H^*_\sigma(\Omega)$.
\end{property}
\begin{remark}
	Property~\ref{Existence-Abel} is a combination of  Theorem 30.7 (\cite{MR1347689}, p. 626) and Property \ref{pro-correspondence-1}.
\end{remark}
\subsection{Dominant singular integral and properties}
\begin{definition}\label{def:integral operator}
	The singular integral operator $S$ is formally defined as
	\[
	(S\psi)(x)=\frac{1}{\pi}\int_a^b \frac{\psi (t)}{t-x} dt, x\in \Omega,
	\]
\end{definition}
the convergence being understood in the principal value sense.
\begin{property}[\cite{MR1347689}, Corollary 2, p. 208]\label{pro:CR}
	Denote $r_a(x)=x-a$, $ r_b(x)=b-x, x\in \overline{\Omega}$.
	\[
	{_xD_b^{-\lambda}}(r_b^{-\lambda}S(r_b^\lambda\psi))=r_a^\lambda S(r_a^{-\lambda}{_xD_b^{-\lambda}}\psi)
	\]
	is valid for $0<\lambda<1, \psi \in L^p(\Omega), p> 1$.
\end{property}
\begin{property} [\cite{MR1347689}, Theorem 11.1, p. 200]  \label{pro:BInW}
	Let $n$ be a positive integer, $0<\lambda<1$, then the operator $S$ is bounded in the space $H^\lambda_0(\overline{\Omega})$, and in the weighted space $H^\lambda_0(\rho, \overline{\Omega})$,  $\rho(x)=\prod_{k=1}^{n}|x-x_k|^{\mu_k}, x_k, x\in \overline{\Omega} $, provided that $\lambda<\mu_k<\lambda+1  \,(k=1,\cdots, n)$.
\end{property}
\begin{property}\label{pro:SolvabilityofDSI}
	Let $c_1, c_2$ be constants and $c_1^2+c_2^2\neq 0$. Denote 
	$\frac{c_1-ic_2}{c_1+ic_2}=e^ {i\theta}$ and choose the value of $\theta$ so that $0\leq \theta<2\pi$.  Further denote spaces $X_1=H^*(\Omega)$, $X_2=H^*(\Omega)\cap C( (a,b] )$, $X_3=H^*(\Omega)\cap C([a,b))$  and $X_4=H^*(\Omega)\cap C([a,b])$, and
	define $n_a$ and $n_b$ as follows:
	\[
	\begin{aligned}
	n_a(X_1)&= n_a(X_2)=1, n_a(X_3)=n_a(X_4)=0;\\ 
	n_b(X_1)&=n_b(X_3)=1, n_b(X_2)=n_b(X_4)=0.
	\end{aligned}
	\]
	Consider  the problem  
	\begin{equation}\label{equ:TheoremDSI}
	\begin{cases}
	c_1\psi(x) + \frac{c_2}{\pi}\int_a^b\frac{\psi(t)}{t-x} dt=f(x), x\in \Omega,\\
	\text{where}\, f(x)=\frac{f_*(x)}{(x-a)^{1-\nu_a}(b-x)^{1-\nu_b}}, \, f_*(x)\in H(\overline{\Omega}), \nu_a,\nu_b\in \mathbb{R}.
	\end{cases}
	\end{equation}
	Then each of  the following holds:
	\begin{enumerate}
		\item  If \[
		1-n_a(X_i)-\frac{\theta}{2\pi}<\nu_a,\, \frac{\theta}{2\pi}-n_b(X_i)<\nu_b,
		\]
		then
		\eqref{equ:TheoremDSI} is  unconditionally solvable in  $X_i \,(i=1, 2, \text{or}\, 3)$ 
		and its general solution $\psi_i(x)$ in $X_i \,(i=1, 2, \text{or}\, 3)$ is given by
		\begin{equation}\label{eq-sloution-CSE}
		\begin{aligned}
		\psi_i(x)&=C(x-a)^{1-n_a(X_i)-\frac{\theta}{2\pi}}(b-x)^{\frac{\theta}{2\pi}-n_b(X_i)}+\frac{c_1f(x)}{c_1^2+c_2^2}-\\
		&\frac{c_2}{\pi(c_1^2+c_2^2)}\int_a^b\left( \frac{x-a}{t-a}\right)^{1-n_a(X_i)-\frac{\theta}{2\pi}}\left(\frac{b-x}{b-t}\right)^{\frac{\theta}{2\pi}-n_b(X_i)} \frac{f(t)}{t-x} dt ,
		\end{aligned}
		\end{equation}
		where 
		\[
		C=0 \,\, \text{for}\, i=2, 3, \, \text{and C is an arbitrary constant for}\,i=1.
		\]
		\item If \eqref{equ:TheoremDSI} is solvable in  $X_4$, then the solution $\psi_4(x)$ is unique and is given by
		\begin{equation}
		\begin{aligned}
		\psi_4(x)&=\frac{c_1f(x)}{c_1^2+c_2^2}-\\
		&\frac{c_2}{\pi(c_1^2+c_2^2)}\int_a^b\left( \frac{x-a}{t-a}\right)^{1-n_a(X_4)-\frac{\theta}{2\pi}}\left(\frac{b-x}{b-t}\right)^{\frac{\theta}{2\pi}-n_b(X_4)} \frac{f(t)}{t-x} dt .
		\end{aligned}
		\end{equation}
	\end{enumerate}
\end{property}
\begin{remark}
	According to the statement of Property \ref{pro:SolvabilityofDSI}, it is worth noting that if a solution of \eqref{equ:TheoremDSI} belongs to the space $X_4$, then it also lies in $X_i$ $(i=1,2,3)$ and therefore it has four equivalent representations which differ only in form. 
	This property is  a special case of Theorem 30.2 on page 609 in \cite{MR1347689} by letting $Z_0(x)=1$, $a_1(x)=c_1$ and $a_2(x)=c_2$ in our case. And in part $(2)$ we  omitted the sufficient and necessary condition for  \eqref{equ:TheoremDSI} to be solvable in  $X_4$ since we shall not need it.  
\end{remark}

\subsection{Facts of fractional Sobolev spaces }
It is known that  there are various ways to define fractional Sobolev spaces, which are essentially equivalent but serve as convenient tools for deriving various properties under different contexts. On the whole real axis, one way is in terms of the Fourier transform as follows.
%[Sobolev Spaces Via Fourier Transform, e.g. \cite{MR2944369,MR2328004}]

\begin{definition}\label{thm:FTHsR}
	Given $0\leq s$, let
	\begin{equation}
	\widehat{H}^s(\mathbb{R}) := \left \{ w \in L^2(\mathbb{R}) : \int_{\mathbb{R}} (1 + |2\pi\xi|^{2s}) |\widehat{w}(\xi) |^2 \, {\rm d} \xi < \infty \right \}.
	\end{equation}

	It is endowed with semi-norm and norm
	\[
	|w|_{\widehat{H}^s(\mathbb{R})}:=\|(2\pi\xi)^s \widehat{w}\|_{L^2(\mathbb{R})},
	\|w\|_{\widehat{H}^s (\mathbb{R})}:=\left(\|w\|^2_{L^2(\mathbb{R})} +|w|^2_{\widehat{H}^s(\mathbb{R})}\right)^{1/2}.
	\]
\end{definition}
By restricting to the  bounded interval, we have 

\begin{definition}\label{def:ForInterval}
	Given $0\leq s$.
	\begin{equation}
	\widehat{H}^s_0(\Omega):=\{\text{Closure of}\, \, u\in C_0^\infty(\Omega) \, \text{with respect to norm}\, \|\tilde{u}\|_{\widehat{H}^s(\mathbb{R})} \},
	\end{equation}
	where notation $\tilde{u}$ denotes the extension of $u(x)$ by $0$ outside $\Omega$. It is endowed with semi-norm and norm
	\[
	|u|_{\widehat{H}^s_0(\Omega)}:=|\tilde{u}|_{\widehat{H}^s(\mathbb{R})},
	\|u\|_{\widehat{H}^s_0(\Omega)}:=\|\tilde{u}\|_{\widehat{H}^s(\mathbb{R})}.
	\]
\end{definition}
It is well-known that $\widehat{H}^s(\mathbb{R})$ is a Hilbert space and so is $\widehat{H}^s_0(\Omega)$.
%

%Analogously, we have the following equivalent semi-norm and norm property for the bounded interval.
%
Another equivalent definition is achieved with the aid of left or right fractional-order weak derivative, which is a generalization of integer-order weak derivative:
\begin{definition}(\cite{glsima18}, Section 3)\label{Equivalent-definition}
	Given $0\leq s$ and assume $u(x)\in L^2(\mathbb{R})$, then  $u(x) \in \widehat{H}^s(\mathbb{R})$ if and only if there exists a unique $\psi_1(x) \in L^2(\mathbb{R})$  such that 
	\begin{equation}
	\int_\mathbb{R} u \cdot\boldsymbol{D}^s \psi =\int_\mathbb{R} \psi_1 \cdot \psi\quad 
	\end{equation}
	for any $\psi\in C^\infty_0(\mathbb{R})$.\\
	Similarly, \\
	$u(x) \in \widehat{H}^s(\mathbb{R})$ if and only if there exists a unique $\psi_2(x) \in L^2(\mathbb{R})$  such that 
	\begin{equation}
	\int_\mathbb{R} u \cdot\boldsymbol{D}^{s*} \psi =\int_\mathbb{R} \psi_2 \cdot \psi
	\end{equation}
	for any $\psi\in C^\infty_0(\mathbb{R})$.
\end{definition}
Using this definition, it is easy to see
\begin{property}\label{pro:alternate-form}
	Given $\frac{1}{2}<s<1$, then $u\in \widehat{H}^s_0(\Omega)$ can be represented as
	\begin{equation}
	u(x)={_aD_x^{-s}\psi_1}={_xD_b^{-s}}\psi_2,
	\end{equation}
	for certain   $\psi_1$, $\psi_2 \in L^2(\Omega)$. As a consequence, ${_aD_x^s}u$ and ${_xD_b^s}u$ exist a.e. and coincide with $\psi_1$, $\psi_2$, respectively.
\end{property}

\section{Main results}\label{sec:raisingtheregularity}

Two lemmas that are of crucial toward the main theorem are proved first.

Let $0<\mu, \alpha, \beta <1, \alpha+\beta=1$ throughout this section (Section \ref{sec:raisingtheregularity}).

\begin{lemma}\label{lemma-Pre-RaiseRegularity}
	Let $A=\alpha-\beta\cos(\mu\pi)$, $B=\beta\sin(\mu\pi)$ and $f(x)\in H(\overline{\Omega})$. Denote  $r_b(x)=b-x, x\in \overline{\Omega}$, $F(x)=(b-x)^\mu f(x)$ and 
	$\frac{A-iB}{A+iB}=e^ {i\theta}$ with the value of $\theta$ chosen so that $0\leq \theta<2\pi$. 
	Consider the problem
	\begin{equation}\label{equ-RProblem-1}
	A\psi(x)+ \frac{B}{\pi}\int_a^b\frac{\psi(t)}{t-x} dt =F(x), x\in \Omega.
	\end{equation}
	Then each of the following is valid:
	\begin{enumerate}
		\item \eqref{equ-RProblem-1} is solvable in spaces $X_2=H^*(\Omega)\cap C( (a,b] )$ and $X_3=H^*(\Omega)\cap C([a,b))$ respectively, and its according solution $\psi_i   (i=2,3)$ is unique and is represented as
		\begin{equation}
		\begin{aligned}
		\psi_i(x)&=\frac{AF(x)}{A^2+B^2}-\\
		&\frac{B}{\pi(A^2+B^2)}\int_a^b\left( \frac{x-a}{t-a}\right)^{1-n_a(X_i)-\frac{\theta}{2\pi}}\left(\frac{b-x}{b-t}\right)^{\frac{\theta}{2\pi}-n_b(X_i)} \frac{F(t)}{t-x} dt ,
		\end{aligned}
		\end{equation}
		where
		\[
		\begin{aligned}
		n_a(X_2)&=1, n_a(X_3)=0;\\ 
		n_b(X_2)&=0, n_b(X_3)=1.
		\end{aligned}
		\]
		\item $\theta$ satisfies 
		\begin{equation}\label{equ:thetacondition}
		\mu<\frac{\theta}{2\pi}<1.
		\end{equation}
		\item   The solution $\psi_i(x)$ in part $(1)$ satisfies
		\begin{equation}
		\frac{\psi_i(x)}{(b-x)^\mu}\in H^*(\Omega), \, ( i=2,3).
		\end{equation}
		\item  The solution $\psi_2(x)$ in part $(1)$ satisfies
		\begin{equation}
		{_aD_x^{-\mu}} \frac{\psi_2}{r_b^\mu} \in L^{p}(I_b),
		\end{equation}	
		where $p=\frac{1}{1-\frac{\theta}{2\pi}},\,I_b=(\frac{b-a}{2}, b) $.
		
		\item  If the solution $\psi_i(x)$ in part $(1)$ satisfies $\psi_2(x)=\psi_3(x)$, $x\in \Omega$, then $\psi_2$ (or $\psi_3$, which is the same) has four equivalent representations:
		\begin{equation}
		\begin{aligned}
		\psi_2(x)&=\frac{AF(x)}{A^2+B^2}-\frac{B}{\pi(A^2+B^2)}\int_a^b\left( \frac{x-a}{t-a}\right)^{1-\frac{\theta}{2\pi}}\left(\frac{b-x}{b-t}\right)^{\frac{\theta}{2\pi}} \frac{F(t)}{t-x} dt ,\\
		&=\frac{AF(x)}{A^2+B^2}-\frac{B}{\pi(A^2+B^2)}\int_a^b\left( \frac{x-a}{t-a}\right)^{1-\frac{\theta}{2\pi}}\left(\frac{b-x}{b-t}\right)^{\frac{\theta}{2\pi}-1} \frac{F(t)}{t-x} dt ,\\
		&=\frac{AF(x)}{A^2+B^2}-\frac{B}{\pi(A^2+B^2)}\int_a^b\left( \frac{x-a}{t-a}\right)^{-\frac{\theta}{2\pi}}\left(\frac{b-x}{b-t}\right)^{\frac{\theta}{2\pi}} \frac{F(t)}{t-x} dt ,\\
		&=\frac{AF(x)}{A^2+B^2}-\frac{B}{\pi(A^2+B^2)}\int_a^b\left( \frac{x-a}{t-a}\right)^{-\frac{\theta}{2\pi}}\left(\frac{b-x}{b-t}\right)^{\frac{\theta}{2\pi}-1} \frac{F(t)}{t-x} dt .
		\end{aligned}
		\end{equation}
		
	\end{enumerate}
\end{lemma}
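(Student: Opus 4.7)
My plan is to attack the five parts in turn, each reducing to a direct application of one of the preliminaries. For part (1) I invoke Property~\ref{pro:SolvabilityofDSI} with $c_1 = A$, $c_2 = B$, writing the right-hand side $F(x) = (b-x)^\mu f(x)$ in the required form $f_*(x)/((x-a)^{1-\nu_a}(b-x)^{1-\nu_b})$ via the trivial choice $f_* = f \in H(\overline{\Omega})$, $\nu_a = 1$, $\nu_b = 1+\mu$. For $i\in\{2,3\}$ the two inequalities on $(\nu_a,\nu_b)$ demanded by that Property are easily verified, the only nontrivial check being $\theta > 0$ (needed when $i=3$), which follows from part (2) of the present lemma; the stated formula is then immediate. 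For part (2), rewrite
\begin{equation*}
A + iB \;=\; \alpha + \beta\bigl(-\cos(\mu\pi) + i\sin(\mu\pi)\bigr) \;=\; \alpha + \beta\,e^{i(1-\mu)\pi},
\end{equation*}
which is a strict convex combination of two unit-modulus points with arguments $0$ and $(1-\mu)\pi$. Hence $\phi := \arg(A+iB) \in (0,(1-\mu)\pi)$, and since $|A+iB| = |A-iB|$ one has $(A-iB)/(A+iB) = e^{-2i\phi}$; normalizing to $\theta \in [0,2\pi)$ gives $\theta = 2\pi - 2\phi \in (2\mu\pi, 2\pi)$, i.e.\ $\mu < \theta/(2\pi) < 1$.

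For part (3) I substitute $F(t) = (b-t)^\mu f(t)$ into the solution formula of part (1) and cancel one power of $(b-x)^\mu$ outside against $(b-t)^\mu$ inside the kernel, obtaining
\begin{equation*}
\frac{\psi_i(x)}{(b-x)^\mu} \;=\; \frac{A}{A^2+B^2}\,f(x) \;-\; \frac{B}{A^2+B^2}\,\bigl(S_{\nu_a^{(i)},\,\nu_b^{(i)}} f\bigr)(x),
\end{equation*}
with weights $\nu_a^{(i)} = 1-n_a(X_i)-\theta/(2\pi)$ and $\nu_b^{(i)} = \theta/(2\pi)-n_b(X_i)-\mu$. The bound $\mu < \theta/(2\pi) < 1$ from part (2) places these weights inside a range where Property~\ref{pro-WeightedMapping} applies for a suitable $\sigma \in (0,1)$; combined with the fact that $f \in H(\overline{\Omega})$ lies in some $H^*_\sigma(\Omega)$ for sufficiently small $\sigma$ (by absorbing factors $(x-a)^{1-\epsilon_1}(b-x)^{1-\epsilon_2}$ into the Hölder part), this delivers $\psi_i/(b-x)^\mu \in H^*_\sigma(\Omega) \subset H^*(\Omega)$. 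Part (4) then leverages the structure just obtained: $\psi_2/(b-x)^\mu$ has at worst an algebraic singularity of order $\theta/(2\pi)$ at the left endpoint and is Hölder (vanishing, in fact) at the right. On $I_b = ((a+b)/2,b)$ the integral ${_aD_x^{-\mu}}(\psi_2/r_b^\mu)$ splits into a contribution from $[a,(a+b)/2]$, where $x-t$ stays uniformly bounded below and the integrand is uniformly bounded in $x$, and a contribution from $[(a+b)/2,x]$ controlled via Property~\ref{pro-mappings}; this yields $L^p(I_b)$-membership for the stated $p = 1/(1-\theta/(2\pi))$.

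For part (5), the hypothesis $\psi_2 \equiv \psi_3$ on $\Omega$ forces the common value to lie in $X_2 \cap X_3 = H^*(\Omega) \cap C([a,b]) = X_4$. By the uniqueness statement of Property~\ref{pro:SolvabilityofDSI} in $X_4$, this function is simultaneously the unique $X_i$-solution for every $i\in\{1,2,3,4\}$ (with the arbitrary constant $C$ forced to $0$ when $i=1$). Plugging each of the four combinations $(n_a(X_i),n_b(X_i))\in\{0,1\}^2$ into the solution formula of part (1) therefore produces four distinct-looking but identically-valued representations of $\psi_2$. The main technical hurdle I anticipate is in part (3): the admissible $\sigma$-intervals coming from Property~\ref{pro-WeightedMapping} for $i=2$ and for $i=3$ appear as complementary half-intervals separated by the threshold $(1+\mu)/2$, so covering the full parameter range $\theta/(2\pi)\in(\mu,1)$ uniformly for both indices may force a case split or an auxiliary argument pinning down the correct branch of the representation.
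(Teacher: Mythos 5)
Your parts (1), (2) and (5) are essentially sound: part (1) matches the paper's application of Property~\ref{pro:SolvabilityofDSI} (the paper takes $f_*=(b-x)^\mu f$, $\nu_a=\nu_b=1$ rather than your $\nu_b=1+\mu$, but both verify the same inequalities), and your convex-combination argument for part (2) is a clean geometric alternative to the paper's computation of $\alpha/\beta=\sin((\theta-2\mu\pi)/2)/\sin(\theta/2)$. However, part (3) contains a genuine gap, and it is exactly the one you flagged at the end but did not resolve. Property~\ref{pro-WeightedMapping} requires $\sigma-1<\nu_a\leq\sigma$ and $\sigma-1<\nu_b\leq\sigma$ simultaneously. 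With your weights, for $i=2$ this forces $\theta/(2\pi)-\mu\leq\sigma<1-\theta/(2\pi)$, which is empty whenever $\theta/(2\pi)\geq(1+\mu)/2$; for $i=3$ it forces $1-\theta/(2\pi)\leq\sigma<\theta/(2\pi)-\mu$, empty whenever $\theta/(2\pi)\leq(1+\mu)/2$. Since part (3) must be proved for \emph{both} $i=2$ and $i=3$ (both solutions are used separately in Lemma~\ref{lem-raising ragularity-1}, where their equality is precisely what is being established, so you cannot ``pin down the correct branch''), no case split can rescue this route. There is a second, independent obstruction: to apply Property~\ref{pro-WeightedMapping} you also need $f\in H^*_\sigma(\Omega)$ for the \emph{same} $\sigma$, which forces $\sigma<\lambda_0$ where $f\in H^{\lambda_0}(\overline{\Omega})$; since $\lambda_0$ may be arbitrarily small while the lower bounds $\theta/(2\pi)-\mu$ or $1-\theta/(2\pi)$ are fixed positive numbers, even the ``favorable'' branch generally fails. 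The paper avoids Property~\ref{pro-WeightedMapping} entirely here: it writes each term explicitly as $g(x)/\bigl((x-a)^{1-\epsilon_1}(b-x)^{1-\epsilon_2}\bigr)$ with $\epsilon$ small, and uses Property~\ref{pro:BInW} (boundedness of the unweighted $S$ in $H^\lambda_0(\rho,\overline{\Omega})$) to show the numerator lies in $H^{\epsilon/2}_0(\overline{\Omega})$; this works uniformly for both $i=2$ and $i=3$.

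Two smaller points. In part (4) your splitting of the integral addresses only the local behavior, but the assertion that $\psi_2/(b-x)^\mu$ is ``Hölder (vanishing) at the right'' is not justified and is in fact slightly too strong (the paper only obtains $\Pi_2\sim h(x)(b-x)^{-\epsilon}$ with $h\in H^{\epsilon/2}_0$, i.e.\ $L^q$ near $b$ for all $q$, which it then feeds through the identity ${_aD_x^{-\mu}}g=\cos(\mu\pi){_xD_b^{-\mu}}g-\sin(\mu\pi){_xD_b^{-\mu}}(r_b^{-\mu}S(r_b^\mu g))$ and Property~\ref{pro-mappings}); your sketch would need this quantitative input, which currently rests on the unproved part (3). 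In part (5), uniqueness in $X_4$ does not by itself force $C=0$ in the $X_1$-representation: the $X_4$ solution corresponds to \emph{some} value of $C$ in the one-parameter $X_1$-family, and identifying that value as $0$ requires the paper's explicit limit computation $C=\lim_{x\to b^-}(\cdots)=0$.
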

\begin{proof}

	1. Proof for part (1). 
	
	Let us see that   part (1) is just a direct consequence of  the first part of  Property~\ref{pro:SolvabilityofDSI}, we only need to justify the applicability of  Property~\ref{pro:SolvabilityofDSI}.
	
	We shall need to check three aspects.
	
	Firstly,  $A^2+B^2\neq 0$ by recalling $0<\mu, \alpha, \beta <1, \alpha+\beta=1$ (see the beginning of  Section \ref{sec:raisingtheregularity}).
	
	Secondly, the function $F(x)$ can be equivalently rewritten as
	\begin{equation}
	F(x)=(b-x)^\mu f(x)=\frac{(b-x)^{\mu} f(x)}{(x-a)^{1-1}(b-x)^{1-1}}.
	\end{equation}

	In the numerator,  since $f\in H(\overline{\Omega})$, there exists a $0<\lambda_0<\mu$ so that
	\begin{equation}\label{equ:lambda}
	f\in H^{\lambda_0}(\overline{\Omega}) ,
	\end{equation}
	and we directly check that 
	\[
	(b-x)^\mu \in H^{\mu}(\overline{\Omega}) 
	\]
	with  the assistance of the well-known inequality
	\begin{equation}\label{equ:wellknowninequality}
	\frac{|y_1^y-y_2^y|}{|y_1-y_2|^y}\leq 1,\, (0\leq y\leq 1, 0<y_1, 0<y_2, y_1\neq y_2).
	\end{equation}
	Therefore, their product $(b-x)^\mu f(x)\in H^{\lambda_0}(\overline{\Omega})$ follows.
	
	Lastly,   observe that $B\neq 0$,  which implies that  $\theta\neq 0$. Hence, 
	\[
	1-n_a(X_i)-\frac{\theta}{2\pi}<1, \frac{\theta}{2\pi}-n_b(X_i)<1
	\]
	hold for  $i=2,  3$. 
	
	So, all the hypotheses are met for applying  the first part of  Property~\ref{pro:SolvabilityofDSI},   the part $(1)$ of Lemma \ref{lemma-Pre-RaiseRegularity} follows.
	
	\vspace{0.2cm}
	
	2. Now we  prove the part (2).
	
	Since
	\begin{equation}\label{equ-AB-Theta}
	\frac{A-iB}{A+iB}=e^{i\theta}\, \text{and}\, 0\leq \theta<2\pi,
	\end{equation}
	it is clear that
	\[\frac{\theta}{2\pi}<1.
	\]
	
	Substituting for $A, B$ and simplifying \eqref{equ-AB-Theta}, we have
	\[
	\frac{\alpha-\beta\cos(\mu\pi)-i\beta\sin(\mu\pi)}{\alpha-\beta\cos(\mu\pi)+i\beta\sin(\mu\pi)}=\frac{\alpha-\beta e^{i\mu \pi}}{\alpha-\beta e^{-i\mu \pi}}=\frac{\frac{\alpha}{\beta}-e^{i\mu\pi}}{\frac{\alpha}{\beta}-e^{-i\mu\pi}}=e^{i\theta}.
	\]
	Solving the last equality  for $\frac{\alpha}{\beta}$ gives
	\[
	\frac{\alpha}{\beta}=\frac{e^{i(\theta-2\mu\pi)}-1}{e^{i\theta}-1}e^{i\mu\pi}.
	\]
	Taking the fact $e^{iz}-1=2ie^{iz/2}\sin(z/2), z\in \mathbb{C}$ into account, we arrive at
	\[
	\frac{\alpha}{\beta}=\frac{\sin((\theta-2\mu\pi)/2)}{\sin(\theta/2)}.
	\]
	Again, recalling $0<\mu, \alpha, \beta <1, \alpha+\beta=1$   we derive
	
	\[
	0<(\theta-2\mu\pi)/2<\pi.
	\]
	Hence, we see the assertion
	\[
	\mu<\frac{\theta}{2\pi}<1.
	\]
	
	This fact will be  used in the rest of  proof and  in the proof of  subsequent lemmas.
	
	\vspace{0.2cm}
	
	3. To prove the part (3), namely $\frac{\psi_i(x)}{(b-x)^\mu}\in H^*(\Omega) \, ( i=2,3)$, we discuss the two cases  separately in this step and the next step.
	
	For $i=2$, substituting for $\psi_2$ into $\frac{\psi_2(x)}{(b-x)^\mu}$ by using the representation in   part (1) of the lemma  and simplifying, we obtain
	\begin{equation}\label{equ-ForPsiTwoExpression}
	\begin{aligned}
	\frac{\psi_2(x)}{(b-x)^\mu}&=\frac{Af(x)}{A^2+B^2}-\frac{B}{\pi(A^2+B^2)}\int_a^b\left( \frac{x-a}{t-a}\right)^{-\frac{\theta}{2\pi}}\left(\frac{b-x}{b-t}\right)^{\frac{\theta}{2\pi}-\mu} \frac{f(t)}{t-x} dt \\
	&=\frac{A}{A^2+B^2}\Pi_1-\frac{B}{\pi(A^2+B^2)}\Pi_2.
	\end{aligned}
	\end{equation}
	
	Let us look at $\Pi_1$ and $\Pi_2$.
	
	$\Pi_1$, namely $f(x)$ (recall $f\in H^{\lambda_0}(\overline{\Omega})$ in \eqref{equ:lambda}), is relatively easily seen to belong to $H^*(\Omega)$ by  manipulating as follows:
	\begin{equation}\label{equ:fractionalpi1}
	\Pi_1=\frac{(x-a)^\epsilon f(x)(b-x)^\epsilon}{(x-a)^{1-(1-\epsilon)}(b-x)^{1-(1-\epsilon)}}, 
	\end{equation}
	where  $\epsilon$ is chosen so that $ 0<\epsilon<\lambda_0$. 
	
	In the numerator,  it can be directly verified with \eqref{equ:wellknowninequality} that
	\begin{equation}
	(x-a)^\epsilon,\,(b-x)^\epsilon\in H^\epsilon(\overline{\Omega}).
	\end{equation}

	Therefore, the product 
	\begin{equation}\label{equ-BelongtoHolderianSpace}
	(x-a)^\epsilon f(x)(b-x)^\epsilon\in H_0^\epsilon(\overline{\Omega})
	\end{equation}
	by taking into account the boundary and $f\in H^{\lambda_0}(\overline{\Omega})$. 
	
	In the denominator of \eqref{equ:fractionalpi1}, simply observe that $1-\epsilon>0$.
	
	Hence, $\Pi_1\in H^*(\Omega)$ follows by the definition of $H^*(\Omega)$ (see notation in Section~\ref{Notations}).
	
	Similarly, to see $\Pi_2\in H^*(\Omega)$, we rewrite
	\begin{equation}\label{equ-UsedForLp}
	\begin{aligned}
	\Pi_2&=\int_a^b\left( \frac{x-a}{t-a}\right)^{-\frac{\theta}{2\pi}}\left(\frac{b-x}{b-t}\right)^{\frac{\theta}{2\pi}-\mu} \frac{f(t)}{t-x} dt \\
	&=\frac{\int_a^b\left(\frac{x-a}{t-a}\right)^{\epsilon}\left(\frac{b-x}{b-t}\right)^{\theta/(2\pi)-\mu+\epsilon}\frac{(t-a)^{\theta/(2\pi)+\epsilon}f(t)(b-t)^\epsilon}{t-x} dt}{(x-a)^{1-(1-\theta/(2\pi)-\epsilon)}(b-x)^{1-(1-\epsilon)}},
	\end{aligned}
	\end{equation}
	where $\epsilon$ is chosen so that $0<\epsilon<\text{min}\{\lambda_0, 1-\frac{\theta}{2\pi}\}$ (it should be clear this $\epsilon$ is different  from  the one in \eqref{equ:fractionalpi1} and we will use nation $\epsilon$ this way several times in the rest of proof).
	
	In the denominator, $1-\theta/(2\pi)-\epsilon>0$, $1-\epsilon>0$.  
	
	According to the definition of   $H^*(\Omega)$, $\Pi_2$ belongs to $ H^*(\Omega)$ is ensured provided that the numerator is in $H^{\epsilon/2}_0(\overline{\Omega})$, namely
	\begin{equation}\label{equ-BelongtoHolderianSpace-1}
	\int_a^b\left(\frac{x-a}{t-a}\right)^{\epsilon}\left(\frac{b-x}{b-t}\right)^{\theta/(2\pi)-\mu+\epsilon}\frac{(t-a)^{\theta/(2\pi)+\epsilon}f(t)(b-t)^\epsilon}{t-x} dt\in H^{\epsilon/2}_0(\overline{\Omega}).
	\end{equation}
	
	\eqref{equ-BelongtoHolderianSpace-1} is verified by a direct application of  Property~\ref{pro:BInW} by checking that 
	\[
	(t-a)^{\theta/(2\pi)+\epsilon}f(t)(b-t)^\epsilon\in H^{\epsilon/2}_0(\overline{\Omega}),
	\]
	which can be justified analogously to \eqref{equ-BelongtoHolderianSpace}, \\
	and that  
	\[
	\epsilon/2<\epsilon<1+\epsilon/2, \quad \epsilon/2<\theta/(2\pi)-\mu+\epsilon<1+\epsilon/2,
	\]
	where the fact that $\frac{\theta}{2\pi}-\mu>0$ from  \eqref{equ:thetacondition} was used in the second piece.
	
	Hence, 
	\begin{equation}\label{equ:Pi2}
	\Pi_2\in H^*(\Omega),
	\end{equation} 
	and therefore, 
	\[
	\frac{\psi_2(x)}{(b-x)^\mu} \in H^*(\Omega).
	\]	 
	
	\vspace{0.2cm}
	
	4.  We continue to consider the case $i=3$.
	
	Substituting for $\psi_3(x)$ into $	\frac{\psi_3(x)}{(b-x)^\mu}$  by using the representation  in   part $(1)$ of the lemma  and simplifying, we have
	\begin{equation}
	\begin{aligned}
	\frac{\psi_3(x)}{(b-x)^\mu}&=\frac{Af(x)}{A^2+B^2}-\frac{B}{\pi(A^2+B^2)}\int_a^b\left( \frac{x-a}{t-a}\right)^{1-\frac{\theta}{2\pi}}\left(\frac{b-x}{b-t}\right)^{\frac{\theta}{2\pi}-1-\mu} \frac{f(t)}{t-x} dt \\
	&=\frac{A}{A^2+B^2}\Sigma_1-\frac{B}{\pi(A^2+B^2)}\Sigma_2.
	\end{aligned}
	\end{equation}
	
	For the first term, $\Sigma_1=f(x)$, which is the same as $\Pi_1$ in the previous step, thus, $\Sigma_1\in H^*(\Omega)$.
	
	For $\Sigma_2$,
	\begin{equation}
	\begin{aligned}
	\Sigma_2&=\int_a^b\left( \frac{x-a}{t-a}\right)^{1-\frac{\theta}{2\pi}}\left(\frac{b-x}{b-t}\right)^{\frac{\theta}{2\pi}-1-\mu} \frac{f(t)}{t-x} dt \\
	&=\frac{\int_a^b\left(\frac{x-a}{t-a}\right)^{1-\theta/(2\pi)+\epsilon}\left(\frac{b-x}{b-t}\right)^{\epsilon}\frac{(t-a)^{\epsilon}f(t)(b-t)^{1-(\theta/(2\pi)-\mu-\epsilon)}}{t-x} dt}{(x-a)^{1-(1-\epsilon)}(b-x)^{1-(\theta/(2\pi)-\mu-\epsilon)}},
	\end{aligned}
	\end{equation}
	where $\epsilon$ is chosen so that $0<\epsilon<\text{min} \{ \lambda_0, \frac{\theta}{2\pi}-\mu \}$.
	
	Again, observe that  in the denominator $1-\epsilon>0$  and $\frac{\theta}{2\pi}-\mu-\epsilon>0$. 
	
	Then by the definition of  $H^*(\Omega)$, $\Sigma_2$ belongs to $ H^*(\Omega)$ is guaranteed provided that the numerator is in $H^{\epsilon/2}_0(\overline{\Omega})$, namely
	\begin{equation}\label{equ-BelongtoHolderianSpace-2}
	\int_a^b\left(\frac{x-a}{t-a}\right)^{1-\theta/(2\pi)+\epsilon}\left(\frac{b-x}{b-t}\right)^{\epsilon}\frac{(t-a)^{\epsilon}f(t)(b-t)^{1-(\theta/(2\pi)-\mu-\epsilon)}}{t-x} dt \in H^{\epsilon/2}_0(\overline{\Omega}).
	\end{equation}
	
	\eqref{equ-BelongtoHolderianSpace-2} is guaranteed by  Property~\ref{pro:BInW} and can be justified similarly to \eqref{equ-BelongtoHolderianSpace-1} without essential difference.
	
	Hence, $\Sigma_2\in H^*(\Omega)$, and therefore, 
	\[
	\frac{\psi_3(x)}{(b-x)^\mu} \in H^*(\Omega).
	\]
	
	This completes the proof for part $(3)$.
	
	\vspace{0.2cm}
	
	5. Proof for part $(4)$.
	
	Using \eqref{equ-ForPsiTwoExpression} and integrating both sides by ${_aD_x^{-\mu}}$,
	\begin{equation}
	{_aD_x^{-\mu}} \frac{\psi_2}{r_b^\mu}=\frac{A}{A^2+B^2}	{_aD_x^{-\mu}} \Pi_1-\frac{B}{\pi(A^2+B^2)}	{_aD_x^{-\mu}} \Pi_2.
	\end{equation}
	
	It is clear that $	{_aD_x^{-\mu}}\Pi_1  \in L^{p}(I_b)$, $p=\frac{1}{1-\frac{\theta}{2\pi}}$, $I_b=(\frac{b-a}{2}, b) $,  since $\Pi_1 =f(x)\in H^{\lambda_0}(\overline{\Omega})$. In order to show  ${_aD_x^{-\mu}} \frac{\psi_2}{r_b^\mu} \in L^{p}(I_b)$, we only need to show ${_aD_x^{-\mu}} \Pi_2\in L^{p}(I_b)$.
	
	Recall from \eqref{equ:Pi2} that $\Pi_2\in H^*(\Omega)$, which  implies that $\Pi_2\in L^z(\Omega)$ for some $z>1$. This allows us to apply the fact (eq.  (11.17), p. 206, \cite{MR1347689}) that 
	\[
	{_aD_x^{-\mu}} g= \cos(\mu \pi){_xD_b^{-\mu}}g-\sin(\mu \pi) {_xD_b^{-\mu}}(r_b^{-\mu}S(r_b^{\mu}g)),\,\text{for}\, g(x)\in L^p(\Omega),p> 1
	\]
	to ${_aD_x^{-\mu}}\Pi_2$ to obtain
	\begin{equation}
	{_aD_x^{-\mu}}\Pi_2=\cos(\mu \pi){_xD_b^{-\mu}}\Pi_2-\sin(\mu \pi) {_xD_b^{-\mu}}(r_b^{-\mu}S(r_b^\mu \Pi_2)).
	\end{equation}
	
	Let us examine ${_xD_b^{-\mu}}\Pi_2$ first. Indeed,
	\begin{equation}
	\Pi_2\in L^{t}(I_b), \quad \text{for any}\quad t>0,
	\end{equation}
	by seeing that in \eqref{equ-UsedForLp} the $\epsilon$ can be chosen as small as possible.
	Certainly, 
	\begin{equation}\label{equ-111}
	{_xD_b^{-\mu}}\Pi_2\in L^p(I_b), \quad p=\frac{1}{1-\frac{\theta}{2\pi}}.
	\end{equation}
	
	Secondly, we look ${_xD_b^{-\mu}}(r_b^{-\mu}S(r_b^\mu \Pi_2))$.  Utilizing  the second line of  \eqref{equ-UsedForLp}, we obtain
	\begin{equation}\label{equ-Analogoue}
	r_b^\mu \Pi_2=\frac{\int_a^b\left(\frac{x-a}{t-a}\right)^{\epsilon}\left(\frac{b-x}{b-t}\right)^{\theta/(2\pi)+\epsilon}\frac{(t-a)^{\theta/(2\pi)+\epsilon}f(t)(b-t)^{\epsilon+\mu}}{t-x} dt}{(x-a)^{\theta/(2\pi)+\epsilon}(b-x)^{\epsilon}},
	\end{equation}
	where  $0<\epsilon<\text{min}\{\lambda_0, 1-\frac{\theta}{2\pi}\}$.
	
	On the right-hand side of ~\eqref{equ-Analogoue},  the numerator belongs to $H^{\epsilon/2}_0(\overline{\Omega})$, which is justified analogously to ~\eqref{equ-BelongtoHolderianSpace-1}. Therefore, by virtue of Property~\ref{pro:BInW},
	\begin{equation}\label{equ:Lemma1Pre}
	r_b^{-\mu} S(r_b^\mu \Pi_2)=\frac{h(x)}{(x-a)^{\theta/(2\pi)+\epsilon}(b-x)^{\epsilon+\mu}},
	\end{equation}
	for a certain $h(x)\in H^{\epsilon/2}_0(\overline{\Omega})$ and $\epsilon$ is the same   as in~\eqref{equ-Analogoue}. It is straightforward to check
	\[
	r_b^{-\mu} S(r_b^\mu \Pi_2)\in L^{1/(\mu+2\epsilon)}(I_b).
	\]

	Remembering that the $\epsilon$ can be chosen as small as possible in \eqref{equ:Lemma1Pre} and by another use of Property~\ref{pro-mappings} to \eqref{equ:Lemma1Pre} over the interval $(\frac{b-a}{2}, \, b)$, we  derive
	\begin{equation}
	{_xD_b^{-\mu}}(r_b^{-\mu}S(r_b^\mu \Pi_2))\in L^{\nu}(I_b),\quad \text{for any}\quad \nu >0.
	\end{equation}
	In particular,
	\begin{equation}\label{equ-222}
	{_xD_b^{-\mu}}(r_b^{-\mu}S(r_b^\mu \Pi_2))\in L^p(I_b), \quad  p=\frac{1}{1-\frac{\theta}{2\pi}}.
	\end{equation}
	Combining ~\eqref{equ-111} and ~\eqref{equ-222} gives the desired result
	\begin{equation}
	{_aD_x^{-\mu}} \frac{\psi_2}{r_b^\mu} \in L^{p}(I_b),
	\end{equation}	
	where 
	\[
	p=\frac{1}{1-\frac{\theta}{2\pi}},\,I_b=(\frac{b-a}{2}, b).
	\]

	6. Proof for part $(5)$.
	
	Since we assume the solutions satisfy $\psi_2(x)=\psi_3(x)$, $\psi_2$ (or $\psi_3$)  belongs to $H^*(\Omega)\cap C([a,b])$ due to $\psi_2\in H^*(\Omega)\cap C( (a,b] )$ and $\psi_3\in H^*(\Omega)\cap C([a,b))$. This means the problem  \eqref{equ-RProblem-1} is solvable in the following four spaces:
	\begin{equation}
	H^*(\Omega), H^*(\Omega)\cap C( (a,b] ),  H^*(\Omega)\cap C([a,b)), H^*(\Omega)\cap C([a,b]).
	\end{equation}
	
	From  Property~\ref{pro:SolvabilityofDSI}, we know that, as a solution of \eqref{equ-RProblem-1}, $\psi_2$ totally has four representations in these four spaces,  namely:
	\begin{equation}
	\begin{aligned}
	\psi_2(x)&=\frac{AF(x)}{A^2+B^2}-\frac{B}{\pi(A^2+B^2)}\int_a^b\left( \frac{x-a}{t-a}\right)^{1-\frac{\theta}{2\pi}}\left(\frac{b-x}{b-t}\right)^{\frac{\theta}{2\pi}} \frac{F(t)}{t-x} dt\\
	&\in H^*(\Omega)\cap C([a,b]),\\
	\psi_2(x)&=\frac{AF(x)}{A^2+B^2}-\frac{B}{\pi(A^2+B^2)}\int_a^b\left( \frac{x-a}{t-a}\right)^{1-\frac{\theta}{2\pi}}\left(\frac{b-x}{b-t}\right)^{\frac{\theta}{2\pi}-1} \frac{F(t)}{t-x} dt\\
	&\in H^*(\Omega)\cap C( [a,b) ),\\
	\psi_2(x)&=\frac{AF(x)}{A^2+B^2}-\frac{B}{\pi(A^2+B^2)}\int_a^b\left( \frac{x-a}{t-a}\right)^{-\frac{\theta}{2\pi}}\left(\frac{b-x}{b-t}\right)^{\frac{\theta}{2\pi}} \frac{F(t)}{t-x} dt \\
	&\in H^*(\Omega)\cap C((a,b]),\\
	\psi_2(x)&=\frac{AF(x)}{A^2+B^2}-\frac{B}{\pi(A^2+B^2)}\int_a^b\left( \frac{x-a}{t-a}\right)^{-\frac{\theta}{2\pi}}\left(\frac{b-x}{b-t}\right)^{\frac{\theta}{2\pi}-1} \frac{F(t)}{t-x} dt\\
	&+C\, (x-a)^{-\frac{\theta}{2\pi}}(b-x)^{\frac{\theta}{2\pi}-1}\\
	&\in H^*(\Omega).
	\end{aligned}
	\end{equation}
	
	This completes the proof  provided that $C=0$ in the forth equation. 
	
	To see this, first we calculate
	\begin{equation}
	\begin{aligned}
	C&=\left( \psi_2(x)-\frac{AF(x)}{A^2+B^2}\right) (x-a)^{\frac{\theta}{2\pi}}(b-x)^{1-\frac{\theta}{2\pi}}\\
	&+\frac{B}{\pi(A^2+B^2)}\int_a^b\frac{(t-a)^{\theta/(2\pi)}f(t)(b-t)^{1-(\theta/(2\pi)-\mu)}}{t-x} dt,\, x\in \Omega.
	\end{aligned}
	\end{equation}
	
	The first term equals to $0$ at the boundary point $x=b$.
	
	In the second term, we can directly verify, analogously to \eqref{equ-BelongtoHolderianSpace}, that
	\[
	(t-a)^{\theta/(2\pi)}f(t)(b-t)^{1-(\theta/(2\pi)-\mu)}\in H^{\epsilon_0}_0(\overline{\Omega}), \epsilon_0=\text{min}\{\lambda_0,\frac{\theta}{2\pi}, 1-(\frac{\theta}{2\pi}-\mu)\}.
	\]
	Thus, in light of  Property \ref{pro:BInW}, we see
	\[
	\frac{1}{\pi}\int_a^b\frac{(t-a)^{\theta/(2\pi)}f(t)(b-t)^{1-(\theta/(2\pi)-\mu)}}{t-x} dt \in H^{\epsilon_0}_0(\overline{\Omega}).
	\]
	
	Consequently, 
	\begin{equation}
	\begin{aligned}
	C&=\lim_{x\rightarrow b^-} \left( \psi_2(x)-\frac{AF(x)}{A^2+B^2}\right) (x-a)^{\frac{\theta}{2\pi}}(b-x)^{1-\frac{\theta}{2\pi}}\\
	&+\lim_{x\rightarrow b^-} \frac{B}{\pi(A^2+B^2)}\int_a^b\frac{(t-a)^{\theta/(2\pi)}f(t)(b-t)^{1-(\theta/(2\pi)-\mu)}}{t-x} dt\\
	&=0+0\\
	&=0.
	\end{aligned}
	\end{equation}
	
	This completes the proof of part $(5)$, and the whole proof of Lemma \ref{lemma-Pre-RaiseRegularity} is completed.
\end{proof}
The following lemma provides a bridge connecting the solutions of  the generalized Abel integral equations in the Sobolev space to the solutions  in the space $H^*(\Omega)$. By which we mean that, if a solution $\psi(x)$ of the generalized Abel integral equation is located in  $\widehat{H}^{(1+\mu)/2}_0(\Omega)$ (see equation \eqref{eq:RaisingRegularity}), then its $\mu$-th order derivative ${_aD_x^\mu}\psi$ actually has a representative belonging to $H^*(\Omega)$ (which is equivalent to what equation \eqref{equ-Representation of J-1} says).
This is an important connection and preparation for us to  continue to raise the regularity of ${_aD_x^\mu}\psi$ (namely $J(x)$) to better spaces $H^*_\sigma(\Omega)$ from $H^*(\Omega)$ in  our main Theorem~\ref{lem: RaisingTheRegularity}.

\begin{lemma}\label{lem-raising ragularity-1}
	Let $c$ be a constant, $\psi(x)\in \widehat{H}^{(1+\mu)/2}_0(\Omega)$ and $f(x)\in H(\overline{\Omega}) $. If 
	\begin{equation}\label{eq:RaisingRegularity}
	\alpha _aD_x^{-(1-\mu)}\psi+\beta {_xD_b^{-(1-\mu)}}\psi \overset{a.e.}{=}{_aD_x^{-1}}f+c, \; x\in \Omega, 
	\end{equation}
	then the solution $\psi(x)$ has a representation
	\begin{equation}\label{equ-Representation of J-1}
	\psi(x)={_aD_x^{-\mu}}J,
	\end{equation}
	where $J(x)\in H^*(\Omega)$ and $J(x)$ has four equivalent representations:
	\begin{equation}\label{equ-Representation of J-2}
	\begin{aligned}
	J(x)&=\frac{A f(x)}{A^2+B^2}-\frac{B}{\pi(A^2+B^2)}\int_a^b\left( \frac{x-a}{t-a}\right)^{1-\frac{\theta}{2\pi}}\left(\frac{b-x}{b-t}\right)^{\frac{\theta}{2\pi}-\mu} \frac{ f(t)}{t-x} dt,\\
	J(x)&=\frac{A f(x)}{A^2+B^2}-\frac{B}{\pi(A^2+B^2)}\int_a^b\left( \frac{x-a}{t-a}\right)^{1-\frac{\theta}{2\pi}}\left(\frac{b-x}{b-t}\right)^{\frac{\theta}{2\pi}-\mu-1} \frac{ f(t)}{t-x} dt,\\
	J(x)&=\frac{A f(x)}{A^2+B^2}-\frac{B}{\pi(A^2+B^2)}\int_a^b\left( \frac{x-a}{t-a}\right)^{-\frac{\theta}{2\pi}}\left(\frac{b-x}{b-t}\right)^{\frac{\theta}{2\pi}-\mu} \frac{ f(t)}{t-x} dt,\\
	J(x)&=\frac{A f(x)}{A^2+B^2}-\frac{B}{\pi(A^2+B^2)}\int_a^b\left( \frac{x-a}{t-a}\right)^{-\frac{\theta}{2\pi}}\left(\frac{b-x}{b-t}\right)^{\frac{\theta}{2\pi}-\mu-1} \frac{ f(t)}{t-x} dt.
	\end{aligned}
	\end{equation}
	where 	$A=\alpha-\beta\cos(\mu\pi)$, $B=\beta\sin(\mu\pi)$.
\end{lemma}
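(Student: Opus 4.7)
The strategy is to turn \eqref{eq:RaisingRegularity} into a dominant Cauchy-type singular integral equation already handled by Lemma~\ref{lemma-Pre-RaiseRegularity}. I introduce $J := {_aD_x^{\mu}}\psi$ as the new unknown, so that $\psi = {_aD_x^{-\mu}}J$, and work toward showing that $\Psi := r_b^\mu J$ satisfies equation \eqref{equ-RProblem-1} with $F = r_b^\mu f$.

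For the reduction, since $\psi \in \widehat{H}^{(1+\mu)/2}_0(\Omega)$ and $(1+\mu)/2 > 1/2$, Property~\ref{pro:alternate-form} combined with the semigroup property of left Riemann-Liouville integrals writes $\psi = {_aD_x^{-\mu}}J$ with $J \in L^q(\Omega)$ for some $q > 1$ (concretely $q = 2/\mu$ via Property~\ref{pro-mappings}). Substituting into \eqref{eq:RaisingRegularity}, the first term becomes ${_aD_x^{-1}}J$ by the semigroup property. For the second term I use the identity \cite{MR1347689}, (11.17),
\[
{_aD_x^{-\mu}}J = \cos(\mu\pi)\,{_xD_b^{-\mu}}J - \sin(\mu\pi)\,{_xD_b^{-\mu}}\bigl(r_b^{-\mu}S(r_b^\mu J)\bigr),
\]
compose with ${_xD_b^{-(1-\mu)}}$ (again semigroup), and arrive at
\[
\alpha\,{_aD_x^{-1}}J + \beta\cos(\mu\pi)\,{_xD_b^{-1}}J - \beta\sin(\mu\pi)\,{_xD_b^{-1}}\bigl(r_b^{-\mu}S(r_b^\mu J)\bigr) = {_aD_x^{-1}}f + c.
\]
Differentiating in $x$ almost everywhere removes $c$ and, after rearrangement, yields
\[
AJ + B\,r_b^{-\mu}S(r_b^\mu J) = f, \qquad A = \alpha - \beta\cos(\mu\pi),\ \ B = \beta\sin(\mu\pi).
\]
Multiplying by $r_b^\mu$ produces $A\Psi + BS\Psi = F$ with $\Psi = r_b^\mu J$ and $F = r_b^\mu f$, which is exactly \eqref{equ-RProblem-1}.

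For the conclusion, Lemma~\ref{lemma-Pre-RaiseRegularity}(1) gives, in explicit form, unique solutions $\psi_2$ and $\psi_3$ of the singular integral equation in $X_2 = H^*(\Omega)\cap C((a,b])$ and $X_3 = H^*(\Omega)\cap C([a,b))$ respectively. To obtain all four formulae for $J$ in \eqref{equ-Representation of J-2}, I need $\Psi$ to lie in $X_4 = H^*(\Omega) \cap C([a,b])$ so that part~(5) of that lemma applies; dividing the four representations of $\Psi$ by $r_b^\mu$ then yields those of $J$ (the second exponent in each integrand shifts by $-\mu$ because the $(b-t)^\mu$ factor inside $F(t)$ absorbs $\mu$ via the $(b-x)^\mu/(b-t)^\mu$ ratio), while $J \in H^*(\Omega)$ follows from Lemma~\ref{lemma-Pre-RaiseRegularity}(3). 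The \textbf{main obstacle} is verifying $\Psi \in X_4$: my intended route is to take the explicit solution $\psi_2$ supplied by Lemma~\ref{lemma-Pre-RaiseRegularity}(1), form the candidate $\tilde\psi := {_aD_x^{-\mu}}(\psi_2/r_b^\mu)$, which by Property~\ref{pro-correspondence-1} lies in $H^*_\mu(\Omega) \subset H^*(\Omega)$, check that $\tilde\psi$ fulfils \eqref{eq:RaisingRegularity} up to a constant, and invoke the injectivity provided by Property~\ref{Existence-Abel} (after absorbing any constant difference into the arbitrary additive constant $c$) to identify $\tilde\psi$ with $\psi$. A secondary bookkeeping concern is to justify the semigroup relations and the reflection identity in the specific integrability class of $J$ produced in the first step.
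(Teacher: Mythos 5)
Your reduction to the dominant singular equation $A\Psi+BS\Psi=F$ with $\Psi=r_b^{\mu}\,{_aD_x^{\mu}}\psi$ is exactly the paper's (the paper differentiates \eqref{eq:RaisingRegularity} first and then substitutes $\psi={_aD_x^{-\mu}}{_aD_x^{\mu}}\psi$, but the computation is identical; note also that $J={_aD_x^{\mu}}\psi\in L^2(\Omega)$ from Property~\ref{pro:alternate-form} already suffices, so the $q=2/\mu$ remark is unnecessary). The genuine gap is your final identification step. Injectivity of $T=\alpha\,{_aD_x^{-(1-\mu)}}+\beta\,{_xD_b^{-(1-\mu)}}$ on $H^*(\Omega)$ (Property~\ref{Existence-Abel}) lets you conclude $\tilde\psi=\psi$ only when $T\tilde\psi$ and $T\psi$ coincide; here $T\psi={_aD_x^{-1}}f+c$ with $c$ \emph{fixed by the hypothesis}, while $T\tilde\psi={_aD_x^{-1}}f+\tilde c$ with $\tilde c$ determined by your candidate $\tilde\psi={_aD_x^{-\mu}}(\psi_2/r_b^{\mu})$. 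You cannot ``absorb'' the discrepancy: if $c\neq\tilde c$, then $\psi-\tilde\psi$ is the unique, nonzero $H^*(\Omega)$-solution of $Tw=c-\tilde c$ (constants do lie in $H^*_{1-\mu}(\Omega)$), which is a constant multiple of $(x-a)^{\mu-\frac{\theta}{2\pi}}(b-x)^{\frac{\theta}{2\pi}-1}$ and hence unbounded at an endpoint. Showing that exactly this spurious term vanishes is the crux of the paper's proof: there, the difference $\Psi_1-\Psi_i$ solves the homogeneous dominant equation, its weighted transform lies in $H^*(\Omega)$ and so equals $C_i(x-a)^{-\frac{\theta}{2\pi}}(b-x)^{\frac{\theta}{2\pi}-1}$; then $C_3=0$ is forced by boundedness of $\psi$ and of ${_aD_x^{-\mu}}(\Psi_3/r_b^{\mu})$ at $x=a$, and $C_2=0$ is forced by the $L^{p}(I_b)$ estimate of part (4) of Lemma~\ref{lemma-Pre-RaiseRegularity} near $x=b$. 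None of this appears in your proposal, and without it the argument does not close.

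Two further points. First, even granting the identification for $\psi_2$, you would obtain only the single representation $\psi={_aD_x^{-\mu}}(\psi_2/r_b^{\mu})$; to land in $X_4$ and invoke part (5) of Lemma~\ref{lemma-Pre-RaiseRegularity} for the four formulas in \eqref{equ-Representation of J-2}, you must also run the argument for $\psi_3$ and then deduce $\psi_2=\psi_3$ from ${_aD_x^{-\mu}}(\psi_2/r_b^{\mu})={_aD_x^{-\mu}}(\psi_3/r_b^{\mu})$ and injectivity of ${_aD_x^{-\mu}}$ on $H^*(\Omega)$, as the paper does. Second, to apply the injectivity of $T$ at all you need $\psi$ itself to lie in $H^*(\Omega)$; this does follow from the embedding of $\widehat{H}^{(1+\mu)/2}_0(\Omega)$ into $H^{\mu/2}_0(\overline{\Omega})$, but it must be stated and used.
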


\begin{proof}\;
	1.  Differentiating both sides of equation~\eqref{eq:RaisingRegularity} is valid by the assumption $\psi\in \widehat{H}_0^{(1+\mu)/2}(\Omega)$ and Property~\ref{pro:alternate-form}, from which we have
	\begin{equation}
	D(\alpha _aD_x^{-(1-\mu)}\psi+\beta {_xD_b^{-(1-\mu)}}\psi )\overset{a.e.}{=}f.
	\end{equation}
	Distributing the differentiation operator $D$ is permitted and gives
	\begin{equation}\label{equ:Raising-1}
	\alpha {_aD_x^\mu}\psi-\beta {_xD_b^\mu}\psi\overset{a.e.}{=}f.
	\end{equation}
	
	Noting $(1+\mu)/2>\mu$ and recalling the knowledge of embedding $\widehat{H}^{(1+\mu)/2}_0(\Omega)$   $\subset$   $ \widehat{H}^{\mu}_0(\Omega)$, we see  from Property~\ref{pro:alternate-form}  that  $\psi(x)$ can be represented as $\psi(x)={_aD_x^{-\mu}}{_aD_x^\mu}\psi$ and ${_aD_x^\mu}\psi\in L^2(\Omega)$.   
	
	Doing back substitution for $\psi$ into \eqref{equ:Raising-1}, the left-hand side becomes
	\begin{equation}\label{equ-SubstitutingLeftIngegral}
	\alpha {_aD_x^\mu}\psi-\beta {_xD_b^\mu}\psi=\alpha {_aD_x^\mu}\psi-\beta {_xD_b^\mu}{_aD_x^{-\mu}}{_aD_x^\mu}\psi.
	\end{equation}
	
	For notation simplicity, we denote $r_a(x)=x-a, r_b(x)=b-x,  x\in \overline{\Omega}$.
	
	Applying the fact (eq. (11.17), p. 206, \cite{MR1347689}) that 
	\[
	{_aD_x^{-\mu}} g= \cos(\mu \pi){_xD_b^{-\mu}}g-\sin(\mu \pi) {_xD_b^{-\mu}}(r_b^{-\mu}S(r_b^{\mu}g)),\,\forall g(x)\in L^p(\Omega),p> 1
	\]
	to the right-hand side of \eqref{equ-SubstitutingLeftIngegral}, we have
	\begin{equation}
	\begin{aligned}
	&=\alpha {_aD_x^\mu}\psi-\beta \cos(\mu\pi){_aD_x^\mu}\psi+\beta\sin(\mu\pi)r_b^{-\mu}S(r_b^\mu{_aD_x^\mu}\psi)\\
	&=\left( \alpha-\beta\cos(\mu\pi)\right){_aD_x^\mu}\psi+\beta\sin(\mu\pi)r_b^{-\mu}S(r_b^\mu{_aD_x^\mu}\psi).
	\end{aligned}
	\end{equation}
	Inserting this back into equation~\eqref{equ:Raising-1},  multiplying both sides by $r_b^\mu$ and denoting \[
	A=\alpha-\beta\cos(\mu\pi),\, B=\beta\sin(\mu\pi),\, \Psi_1(x)=r_b^\mu{_aD_x^\mu}\psi,\, F(x)=r_b^\mu f(x), 
	\]
	we arrive at
	\begin{equation}\label{equ:SDI}
	A\Psi_1(x) + \frac{B}{\pi}\int_a^b\frac{\Psi_1(t)}{t-x} dt \overset{a.e.}{=}F(x), x\in \Omega.
	\end{equation}
	
	\vspace{0.2cm}
	
	2.  On the other hand, by Lemma~\ref{lemma-Pre-RaiseRegularity}, we already know that there exist solutions $\Psi_2 \in H^*(\Omega)\cap C((a, b])$ and $\Psi_3\in H^*(\Omega)\cap C([a, b))$ satisfying
	\begin{equation}\label{equ:SDI2} 
	A\Psi_i(x)+ \frac{B}{\pi}\int_a^b\frac{\Psi_i(x)}{t-x} dt =F(x), x\in \Omega, (i=2,3),
	\end{equation}
	and that
	\begin{equation}\label{equ-BelongtoH^*}
	\frac{\Psi_2}{(b-x)^\mu}, \frac{\Psi_3}{(b-x)^\mu} \in H^*(\Omega), \quad \mu<\frac{\theta}{2\pi}<1.
	\end{equation}
	
	Notice that the distinction between \eqref{equ:SDI} and \eqref{equ:SDI2} is that \eqref{equ:SDI} holds $a.e.$ and \eqref{equ:SDI2} holds for every $x\in \Omega$ and that $\Psi_2$ and $\Psi_3$ belong to $H^*(\Omega)$ but $\Psi_1$ is not immediately clear yet for now. 
	
	In the following, our strategy is to intend to show that
	\begin{equation}
	\Psi_1(x)=\Psi_2(x)=\Psi_3(x),
	\end{equation}
	which  produces \eqref{equ-Representation of J-1} in the lemma, and after this, \eqref{equ-Representation of J-2} will be obtained shortly, the whole lemma hence will be eventually completed.
	
	\vspace{0.2cm}
	
	3. Now let us continue. 
	
	Subtracting \eqref{equ:SDI2} from \eqref{equ:SDI} gives
	\begin{equation}\label{equ:SDI23}
	A\Psi(x) + \frac{B}{\pi}\int_a^b\frac{\Psi(t)}{t-x} dt\overset{a.e.}{=}0,\, x\in \Omega,
	\end{equation}
	where 
	\[
	\Psi(x)=\Psi_1(x)-\Psi_i(x), (i=2,3).
	\]
	
	(Evidently, $\Psi(x)$ depends on $i$ and it should not be confused. We denote $\Psi(x)$ this way simply because it is more convenient to discuss this two cases $i=2,3$ together in the rest of proof rather than separately.)

	Dividing both sides of \eqref{equ:SDI23} by $r_b^\mu$, we arrive at
	\begin{equation}\label{equ:SDI234}
	A\widetilde{\Psi}(x) + \frac{B}{\pi}\frac{1}{(b-x)^\mu}\int_a^b\frac{(b-t)^\mu \widetilde{\Psi}(t)}{t-x} dt\overset{a.e.}{=}0, \, x\in \Omega,
	\end{equation}
	where 
	\[
	\widetilde{\Psi}(x)=\frac{\Psi(x)}{(b-x)^\mu}=\frac{\Psi_1(x)}{(b-x)^\mu}-\frac{\Psi_i(x)}{(b-x)^\mu}, (i=2, 3).
	\]
	
	\vspace{0.2cm}
	
	4. Let us examine which functional space $\widetilde{\Psi}(x)$ belongs to  (discussing the two cases $i=2,3$ together).
	
	First,
	\[
	\frac{\Psi_1(x)}{(b-x)^\mu}={_aD_x^\mu}\psi \in L^2(\Omega), \, \text{since $\psi\in \widehat{H}^{(1+\mu)/2}_0(\Omega)$}.
	\]
	Secondly, since 
	\begin{equation}
	\frac{\Psi_i}{(b-x)^\mu}\in H^*(\Omega),
	\end{equation}
	we can always find a certain $p>1$ that does not depend on $i$ such that
	\begin{equation}
	\frac{\Psi_i}{(b-x)^\mu}\in L^p(\Omega), (i=2,3),
	\end{equation}
	by taking into account the definition of  $H^*(\Omega)$.
	
	Combining these together, we  conclude that
	\begin{equation}\label{eq-find a space}
	\widetilde{\Psi}(x) \in L^p(\Omega),\, \text{for a certain $p>1$}.
	\end{equation}
	
	\vspace{0.2cm}
	
	5. The establishment of  \eqref{eq-find a space}  allows us to be  able to  apply Property~\ref{pro:CR} to equation~\eqref{equ:SDI234}. To do so, integrating both sides of ~\eqref{equ:SDI234} by ${_xD_b^{-\mu}}$, we obtain
	\begin{equation}
	A{_xD_b^{-\mu}}\widetilde{\Psi} + \frac{B}{\pi}(x-a)^{\mu}\int_a^b\frac{{_tD_b^{-\mu}}\widetilde{\Psi} }{(t-a)^\mu(t-x)} dt\overset{a.e.}{=}0, \, x\in \Omega.
	\end{equation}
	Dividing both sides by $(x-a)^\mu$, we arrive at
	\begin{equation}\label{equ:SDI2345}
	A\widetilde{\widetilde{\Psi}}(x) + \frac{B}{\pi}\int_a^b\frac{\widetilde{\widetilde{\Psi}}(t)}{t-x} dt\overset{a.e.}{=}0, \, x\in \Omega,
	\end{equation}
	where
	\[
	\widetilde{\widetilde{\Psi}}(x)=\frac{{_xD_b^{-\mu}}\widetilde{\Psi}}{(x-a)^\mu}.
	\]
	
	Before going further, let us call attention to that \eqref{equ:SDI2345} holds $a.e.$ at this stage since $\widetilde{\widetilde{\Psi}}(x)$ is essentially  in terms of $\psi(x)$, and in the next two steps we intend to show that $\widetilde{\widetilde{\Psi}}(x)$ actually admits a good representative such that \eqref{equ:SDI2345} holds for every $x\in \Omega$.
	
	\vspace{0.2cm}
	
	6. Now we assert that $\widetilde{\widetilde{\Psi}}(x)\in H^*(\Omega)$ (for both $i=2,3$). (It should be clear that $\widetilde{\widetilde{\Psi}}(x)\in H^*(\Omega)$ means there is a representative in the equivalence classes of $\widetilde{\widetilde{\Psi}}(x)$ such that it belongs to $ H^*(\Omega)$).

	To see this, substituting for $\widetilde{\Psi}$ into $\widetilde{\widetilde{\Psi}}(x)$, we  have
	\begin{equation}\label{equ-Doulble tidle}
	\begin{aligned}
	\widetilde{\widetilde{\Psi}}(x)&=\frac{1}{(x-a)^\mu} {_xD_b^{-\mu}}\left(\frac{1}{r_b^\mu}\Psi_1-\frac{1}{r_b^\mu}\Psi_i \right)\\
	&=\frac{1}{(x-a)^\mu}{_xD_b^{-\mu}}{_aD_x^\mu}\psi -\frac{1}{(x-a)^\mu}{_xD_b^{-\mu}}\frac{\Psi_i}{r_b^\mu}\\
	&=M_1-M_2.
	\end{aligned}
	\end{equation}
	
	It suffices to investigate $M_1$ and $M_2$, respectively.
	
	Consider $M_1$ first and examine  the piece ${_xD_b^{-\mu}}{_aD_x^\mu}\psi$. By Property~\ref{pro:alternate-form}, there exists a function $\psi_1(x)\in L^2(\Omega)$ such that 
	\[
	{_aD_x^\mu}\psi={_aD_x^\mu}{_aD_x^{-(1+\mu)/2}}\psi_1={_aD_x^{-(1-\mu)/2}}\psi_1.
	\]
	On the other hand,   there exists a  function $\psi_2(x)\in L^2(\Omega)$ such that
	\[
	{_aD_x^{-(1-\mu)/2}}\psi_1={_xD_b^{-(1-\mu)/2}}\psi_2,
	\]
	(Corollary 1, p. 208, \cite{MR1347689}). 
	Thus,
	\[
	{_xD_b^{-\mu}}{_aD_x^\mu}\psi={_xD_b^{-(1+\mu)/2}}\psi_2\in  H^{\mu/2}  (\overline{\Omega})
	\]
	is guaranteed by Property \ref{pro:MappingIntoHolderianFromLp}.
	
	If we equivalently rewrite $M_1$ as
	\begin{equation}\label{eq-Rerepresentation}
	M_1=\frac{(x-a)^\epsilon ({_xD_b^{-\mu}}{_aD_x^\mu}\psi) (b-x)^\epsilon}{(x-a)^{1-(1-\mu-\epsilon)}(b-x)^{1-(1-\epsilon)}},
	\end{equation}
	where $\epsilon$ is chosen so that $0<\epsilon<\min\{1-\mu,\mu/2\}$, then by simple steps as we justified for $\Pi_1$ in the step 3 of  the proof of Lemma~\ref{lemma-Pre-RaiseRegularity},
	we see
	\[
	M_1\in H^*(\Omega),
	\]
	and not repeated here.
	
	For  $M_2$, indeed, by virtue of Property~\ref{pro-correspondence-1} and recalling \eqref{equ-BelongtoH^*}, ${_xD_b^{-\mu}}\frac{\Psi_i}{r_b^\mu}$ can be represented as
	\begin{equation}
	{_xD_b^{-\mu}}\frac{\Psi_i}{r_b^\mu}=\frac{g_i(x)}{(x-a)^{1-\epsilon_1}(b-x)^{1-\epsilon_2}},
	\end{equation}
	for certain functions $g_i(x)$ and real numbers $k,\epsilon_1,\epsilon_2$ satisfying
	\begin{equation}
	g_i(x)\in H^k_0(\overline{\Omega}),\quad \mu<k, 0<\epsilon_1, 0<\epsilon_2 \quad (\text{$k,\epsilon_1,\epsilon_2$ depend on $i$}).
	\end{equation}
	Taking into account the useful fact that
	\begin{equation}
	\frac{g_i(x)}{(x-a)^\mu}\in H^l_0(\overline{\Omega}) , \quad \text{for any $0<l<k-\mu$},
	\end{equation}
	(the value of $\frac{g_i(x)}{(x-a)^\mu}$ at $x=a$ is understood in the  limiting sense) and the definition of  $H^*(\Omega)$, we see
	
	\begin{equation}
	M_2=\frac{1}{(x-a)^\mu}{_xD_b^{-\mu}}\frac{\Psi_i}{r_b^\mu}=\frac{\frac{g_i(x)}{(x-a)^\mu} }{(x-a)^{1-\epsilon_1}(b-x)^{1-\epsilon_2}}\in H^*(\Omega), (i=2,3).
	\end{equation}
	
	Combing $M_1$ and $M_2$ yields the assertion
	\[
	\widetilde{\widetilde{\Psi}}(x)\in H^*(\Omega)\quad  \text{for both}\, i=2,3.
	\]
	
	\vspace{0.2cm}
	
	7. Once we have the above, it now is notable that equation~\eqref{equ:SDI2345} becomes valid for every point in $\Omega$, namely
	\begin{equation}\label{lemma2dominantSI}
	A\widetilde{\widetilde{\Psi}}(x) + \frac{B}{\pi}\int_a^b\frac{\widetilde{\widetilde{\Psi}}(t)}{t-x} dt=0,\, \text{for each $x\in \Omega$}.
	\end{equation}
	
	(Putting it another way, there is a representative for the equivalence classes of $\widetilde{\widetilde{\Psi}}(x) $ such that it belongs to $H^*(\Omega)$ and makes \eqref{equ:SDI2345} hold for every $x\in \Omega$ ).
	
	On the other hand, remember that, in $H^*(\Omega)$, \eqref{lemma2dominantSI} is unconditionally solvable according to the part $(1)$ of Property~\ref{pro:SolvabilityofDSI}. Hence by utilizing \eqref{eq-sloution-CSE}, we derive that  $\widetilde{\widetilde{\Psi}}(x)$ can be represented as a constant multiple of $(x-a)^{1-n_a-\frac{\theta}{2\pi}}(b-x)^{\frac{\theta}{2\pi}-n_b}$ with choosing $n_a=1,n_b=1$, namely
	\begin{equation}\label{equ:SI}
	\widetilde{\widetilde{\Psi}}(x)=C_i \cdot(x-a)^{-\frac{\theta}{2\pi}}(b-x)^{\frac{\theta}{2\pi}-1}, 
	\end{equation}
	where $C_i$ depends on the $i$ in $\widetilde{\widetilde{\Psi}}(x)$  (i=2,3).

	In the next two steps, we will show $C_2$ and $C_3$ have to be zero, separately.
	
	\vspace{0.2cm}
	
	8. Using the second line of expression~\eqref{equ-Doulble tidle}, we solve equation~\eqref{equ:SI} for ${_aD_x^\mu}\psi$ to obtain
	\begin{equation}\label{eq-Final Equation}
	{_aD_x^\mu}\psi=\frac{\Psi_i(x)}{(b-x)^\mu}+C_i \,{_xD_b^\mu}((x-a)^{-\frac{\theta}{2\pi}+\mu}(b-x)^{\frac{\theta}{2\pi}-1}),\, (i=2,3).
	\end{equation}

	Integrating both sides by ${_aD_x^{-\mu}}$, which is valid, and also noting $\psi(x)={_aD_x^{-\mu}}{_aD_x^{\mu}}\psi$, we have
	\begin{equation}\label{equ:Psifunctiontosimplify}
	\psi(x)={_aD_x^{-\mu}}\frac{\Psi_i}{r_b^\mu}+C_i \,{_aD_x^{-\mu}}{_xD_b^\mu}((x-a)^{-\frac{\theta}{2\pi}+\mu}(b-x)^{\frac{\theta}{2\pi}-1}),\, (i=2,3).
	\end{equation}
	
	Calculating the second term on the right-hand side (using eq. (11.4) and (11.19), \cite{MR1347689}),
	\begin{equation}
	\begin{aligned}
	&{_aD_x^{-\mu}}{_xD_b^\mu}((x-a)^{-\frac{\theta}{2\pi}+\mu}(b-x)^{\frac{\theta}{2\pi}-1})\\
	&=\left(\cos(\mu \pi)+\sin(\mu \pi)\cot(\pi-\frac{\theta}{2}) \right)(x-a)^{-\frac{\theta}{2\pi}+\mu}(b-x)^{\frac{\theta}{2\pi}-1}\\
	&=\widetilde{C} \cdot (x-a)^{-\frac{\theta}{2\pi}+\mu}(b-x)^{\frac{\theta}{2\pi}-1},
	\end{aligned}
	\end{equation}
	where
	\[
	\widetilde{C} =\cos(\mu \pi)+\sin(\mu \pi)\cot(\pi-\frac{\theta}{2}).
	\]
	Equation \eqref{equ:Psifunctiontosimplify} further becomes
	\begin{equation}\label{constantshavetobezero}
	\psi(x)={_aD_x^{-\mu}}\frac{\Psi_i}{r_b^\mu}+C_i\widetilde{C} \cdot (x-a)^{-\frac{\theta}{2\pi}+\mu}(b-x)^{\frac{\theta}{2\pi}-1}, \, (i=2,3).
	\end{equation}
	
	Consider the case $i=3$ now. On one hand, recall that $\mu<\frac{\theta}{2\pi}<1$ from \eqref{equ-BelongtoH^*}. This implies that the coefficient $\widetilde{C}$ is non-zero and hence that $\widetilde{C} \cdot(x-a)^{-\frac{\theta}{2\pi}+\mu}(b-x)^{\frac{\theta}{2\pi}-1}$ is unbounded at the boundary point $x=a$. On the other hand,   both $\psi(x)$ (belonging to $\widehat{H}^{(1+\mu)/2}_0(\Omega)$) and ${_aD_x^{-\mu}}\frac{\Psi_3}{r_b^\mu}$  (remember $\Psi_3\in H^*(\Omega)\cap C([a, b))$ ) are bounded at $x=a$, which contradicts the unboundedness of $C_3\widetilde{C} \cdot (x-a)^{-\frac{\theta}{2\pi}+\mu}(b-x)^{\frac{\theta}{2\pi}-1}$ unless $C_3=0$. Thereby, we  arrive at
	\begin{equation}\label{equ-first-equality-for-i=3}
	\psi(x)={_aD_x^{-\mu}}\frac{\Psi_3}{r_b^\mu}.
	\end{equation}
	
	So, we have proved \eqref{equ-Representation of J-1} in the lemma by letting $J(x)=\frac{\Psi_3}{r_b^\mu}$ and noting $J(x) \in H^*(\Omega)$ from \eqref{equ-BelongtoH^*}. We remain to show that $\frac{\Psi_3}{r_b^\mu}$ has four equivalent representations, namely \eqref{equ-Representation of J-2}, which follows from the next step.
	
	\vspace{0.2cm}
	
	9.   In this last step we  show that the constant $C_2$ in \eqref{constantshavetobezero} for the case $i=2$ has to be zero as well, namely, $C_2=0$.
	
	Using equation~\eqref{constantshavetobezero} and doing subtraction with each other for $i=2,3$, 
	\begin{equation}\label{equ:DoingSubtracting}
	{_aD_x^{-\mu}}\frac{\Psi_3}{r_b^\mu}-{_aD_x^{-\mu}}\frac{\Psi_2}{r_b^\mu}=C_2\widetilde{C}\cdot(x-a)^{-\frac{\theta}{2\pi}+\mu}(b-x)^{\frac{\theta}{2\pi}-1}.
	\end{equation}

	Let us take care about each term  above  at the  boundary point $x=b$ to  obtain a contradiction.
	
	For the left-hand side, using  \eqref{equ-first-equality-for-i=3} and invoking the part $(4)$ of Lemma~\ref{lemma-Pre-RaiseRegularity}, we know
	\begin{equation}
	{_aD_x^{-\mu}}\frac{\Psi_3}{r_b^\mu} \left(=\psi(x)\in \widehat{H}^{(1+\mu)/2}_0(\Omega)\right), \,\, {_aD_x^{-\mu}}\frac{\Psi_2}{r_b^\mu}  \in L^{p}(I_b),
	\end{equation}
	where $p=\frac{1}{1-\frac{\theta}{2\pi}},\,I_b=(\frac{b-a}{2}, b) $. However, in the right-hand side of \eqref{equ:DoingSubtracting}, it is clear that
	\begin{equation}
	\widetilde{C}\cdot(x-a)^{-\frac{\theta}{2\pi}+\mu}(b-x)^{\frac{\theta}{2\pi}-1}  \notin L^{p}(I_b).
	\end{equation}

	This means that $C_2$ has to be zero in \eqref{equ:DoingSubtracting}, from which it follows that
	\begin{equation}\label{equ-first-equality-for-i=2}
	\psi(x)={_aD_x^{-\mu}}\frac{\Psi_2}{r_b^\mu}.
	\end{equation}
	
	Comparing \eqref{equ-first-equality-for-i=3} and \eqref{equ-first-equality-for-i=2},  we see 
	\begin{equation}
	{_aD_x^{-\mu}}(\frac{\Psi_3}{r_b^\mu}-\frac{\Psi_2}{r_b^\mu})=0.
	\end{equation}
	
	Only trivial solution is allowed by  seeing $\frac{\Psi_2}{r_b^\mu}, \frac{\Psi_3}{r_b^\mu} \in H^*(\Omega)$ from \eqref{equ-BelongtoH^*}, and thus
	\begin{equation}\label{equ:equality}
	\Psi_2(x)=\Psi_3(x),\, x\in \Omega.
	\end{equation}
	
	Once we have  equality \eqref{equ:equality},  by virtue of the part (5) of Lemma~\ref{lemma-Pre-RaiseRegularity}, \eqref{equ-Representation of J-2}, namely the four desired representations for $\frac{\Psi_3}{r_b^\mu}$ (or $\frac{\Psi_2}{r_b^\mu}$) in the lemma,  follows immediately  after dividing by $r_b^\mu$.
	
	This finally completes the whole proof.
\end{proof}
Now we go one  step further from above lemma by showing that ${_aD_x^\mu}\psi$  can actually go  to better spaces $H_\sigma^*(\Omega)$ from  $H^*(\Omega)$ provided that $f$ lies in $H_\sigma^*(\Omega)$ (which is the same as what \eqref{equ:lemma3representation} means in the following).

\begin{theorem}\label{lem: RaisingTheRegularity}
	Given  $0<\sigma<1$,  let $c$ be a constant, $\psi(x)\in \widehat{H}^{(1+\mu)/2}_0(\Omega)$ and $f(x)\in H(\overline{\Omega}) \cap H^*_\sigma(\Omega)$. If 
	\begin{equation}\label{equ-GoFurther}
	\alpha _aD_x^{-(1-\mu)}\psi+\beta {_xD_b^{-(1-\mu)}}\psi \overset{a.e.}{=}{_aD_x^{-1}}f +c, x\in\Omega,
	\end{equation}
	then the solution $\psi(x)$ has a representation
	\begin{equation}\label{equ:lemma3representation}
	\psi(x)={_aD_x^{-(\mu+\sigma)}}K_\sigma,
	\end{equation}
	where function $K_\sigma(x)$ belongs to $H^*(\Omega)$ ($K_\sigma$ depends on $\sigma$).
\end{theorem}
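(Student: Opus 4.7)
The plan is to build directly on Lemma \ref{lem-raising ragularity-1}, which already supplies the representation $\psi = {_aD_x^{-\mu}}J$ with $J \in H^*(\Omega)$, together with four equivalent integral expressions for $J$ in \eqref{equ-Representation of J-2}. Once we upgrade the membership $J \in H^*(\Omega)$ to $J \in H_\sigma^*(\Omega)$, Property \ref{pro-correspondence-1} produces a unique $K_\sigma \in H^*(\Omega)$ with $J = {_aD_x^{-\sigma}}K_\sigma$, and the semigroup law for left Riemann--Liouville integrals gives
\[
\psi = {_aD_x^{-\mu}}J = {_aD_x^{-\mu}}{_aD_x^{-\sigma}}K_\sigma = {_aD_x^{-(\mu+\sigma)}}K_\sigma,
\]
which is exactly the claim \eqref{equ:lemma3representation}.

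The real work therefore is to show $J \in H_\sigma^*(\Omega)$, and this is where the hypothesis $f \in H_\sigma^*(\Omega)$ (strictly stronger than the $H(\overline{\Omega})$ of Lemma \ref{lem-raising ragularity-1}) is consumed. The non-integral term $\frac{Af}{A^2+B^2}$ is trivially in $H_\sigma^*(\Omega)$ as a scalar multiple of $f$. For the integral term I would select, among the four representations in \eqref{equ-Representation of J-2}, the one whose weight exponents $\nu_a, \nu_b$ satisfy the hypothesis $\sigma-1 < \nu_a \leq \sigma$ and $\sigma-1 < \nu_b \leq \sigma$ of Property \ref{pro-WeightedMapping}; applying that property then sends $f \in H_\sigma^*(\Omega)$ to the integral in $H_\sigma^*(\Omega)$, completing the upgrade.

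The main obstacle is routine but careful bookkeeping to verify that the four available weight pairs cover every $\sigma \in (0,1)$. Recalling from part (2) of Lemma \ref{lemma-Pre-RaiseRegularity} that $\mu < \theta/(2\pi) < 1$, the available $\nu_a$ values are $\{1-\theta/(2\pi),\, -\theta/(2\pi)\}$ and the available $\nu_b$ values are $\{\theta/(2\pi)-\mu,\, \theta/(2\pi)-\mu-1\}$. A short case analysis shows: pick $\nu_a = 1-\theta/(2\pi)$ when $\sigma \geq 1-\theta/(2\pi)$ and $\nu_a = -\theta/(2\pi)$ otherwise; pick $\nu_b = \theta/(2\pi)-\mu$ when $\sigma \geq \theta/(2\pi)-\mu$ and $\nu_b = \theta/(2\pi)-\mu-1$ otherwise. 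Each of the resulting four $\sigma$-regimes lands one weight pair inside the required half-open windows, so one of the four expressions for $J$ in \eqref{equ-Representation of J-2} always lies in $H_\sigma^*(\Omega)$, and the proof is finished.
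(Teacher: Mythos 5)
Your proposal is correct and follows essentially the same route as the paper: invoke Lemma~\ref{lem-raising ragularity-1} for $\psi={_aD_x^{-\mu}}J$ with its four representations, split $(0,1)\ni\sigma$ into the four regimes determined by $\sigma+\theta/(2\pi)\gtrless 1$ and $\sigma+\mu\gtrless\theta/(2\pi)$ so that one weight pair $(\nu_a,\nu_b)$ falls in the window of Property~\ref{pro-WeightedMapping}, conclude $J\in H^*_\sigma(\Omega)$, and finish with Property~\ref{pro-correspondence-1} plus the semigroup law. Your case selection matches the paper's association of Case\&$k$ with Expression\&$k$ exactly.
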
	
\begin{proof}  Notice that, compared to  Lemma~\ref{lem-raising ragularity-1}, only one more condition is imposed in this lemma, namely $f(x)\in H^*_\sigma(\Omega), 0<\sigma<1$. 
	
	\vspace{0.2cm}
	
	1. As a direct consequence of Lemma~\ref{lem-raising ragularity-1}, we  already know that the function $\psi(x)$ must have a representation
	\begin{equation}\label{equ-FirstIntegralEx}
	\psi(x)={_aD_x^{-\mu}}J,
	\end{equation}
	where $J(x)\in H^*(\Omega)$ and $J(x)$ has four equivalent representations:
	\begin{equation}\label{equ-FourDiffRepresentations}
	\begin{aligned}
	J(x)&=\frac{A f(x)}{A^2+B^2}-\frac{B}{\pi(A^2+B^2)}\int_a^b\left( \frac{x-a}{t-a}\right)^{1-\frac{\theta}{2\pi}}\left(\frac{b-x}{b-t}\right)^{\frac{\theta}{2\pi}-\mu} \frac{ f(t)}{t-x} dt;\\
	J(x)&=\frac{A f(x)}{A^2+B^2}-\frac{B}{\pi(A^2+B^2)}\int_a^b\left( \frac{x-a}{t-a}\right)^{1-\frac{\theta}{2\pi}}\left(\frac{b-x}{b-t}\right)^{\frac{\theta}{2\pi}-\mu-1} \frac{ f(t)}{t-x} dt;\\
	J(x)&=\frac{A f(x)}{A^2+B^2}-\frac{B}{\pi(A^2+B^2)}\int_a^b\left( \frac{x-a}{t-a}\right)^{-\frac{\theta}{2\pi}}\left(\frac{b-x}{b-t}\right)^{\frac{\theta}{2\pi}-\mu} \frac{ f(t)}{t-x} dt;\\
	J(x)&=\frac{A f(x)}{A^2+B^2}-\frac{B}{\pi(A^2+B^2)}\int_a^b\left( \frac{x-a}{t-a}\right)^{-\frac{\theta}{2\pi}}\left(\frac{b-x}{b-t}\right)^{\frac{\theta}{2\pi}-\mu-1} \frac{ f(t)}{t-x} dt.
	\end{aligned}
	\end{equation}
	where 	$A=\alpha-\beta\cos(\mu\pi)$, $B=\beta\sin(\mu\pi)$.
	
	For ease of notation, we denote \eqref{equ-FourDiffRepresentations} as 
	\begin{equation}
	\begin{aligned}
	J(x)&=\text{Expression}\&1;\\
	J(x)&=\text{Expression}\&2;\\
	J(x)&=\text{Expression}\&3;\\
	J(x)&=\text{Expression}\&4.
	\end{aligned}
	\end{equation}
	
	2. For any given $0<\sigma<1$, $\sigma$  must satisfy one of the following inequality cases:
	\begin{equation}
	\begin{aligned}
	\text{Case}\&1&: \sigma+\frac{\theta}{2\pi}\geq 1, \sigma +\mu\geq \frac{\theta}{2\pi};\\
	\text{Case}\&2&: \sigma+\frac{\theta}{2\pi}\geq 1, \sigma +\mu< \frac{\theta}{2\pi};\\
	\text{Case}\&3&: \sigma+\frac{\theta}{2\pi}< 1, \sigma +\mu\geq \frac{\theta}{2\pi};\\
	\text{Case}\&4&: \sigma+\frac{\theta}{2\pi}< 1, \sigma +\mu<\frac{\theta}{2\pi}.\\
	\end{aligned}
	\end{equation}
	
	3. We associate different cases to different expressions as follows:
	\begin{equation}\label{equ-MixedTogether}
	\begin{aligned}
	\text{Case}\&1& \quad \text{with} \quad\text{Expression}\&1;\\
	\text{Case}\&2&\quad\text{with} \quad \text{Expression}\&2;\\
	\text{Case}\&3&\quad\text{with} \quad\text{Expression}\&3;\\
	\text{Case}\&4&\quad\text{with} \quad \text{Expression}\&4.\\
	\end{aligned}
	\end{equation}
	
	For each of \eqref{equ-MixedTogether}, applying Property~\ref{pro-WeightedMapping} to the according expression of $J(x)$ is valid by checking the two inequality conditions in Property~\ref{pro-WeightedMapping} and  yields that
	\begin{equation}
	J(x)\in H^*_\sigma(\Omega).
	\end{equation}

	4. Utilizing Property~\ref{pro-correspondence-1}, we know that there exists a  certain function $K_\sigma(x)\in H^*(\Omega)$ such that 
	\begin{equation}\label{equ:Tobeinserted}
	J(x)={_aD_x^{-\sigma}}K_\sigma,
	\end{equation}
	and therefore, by inserting \eqref{equ:Tobeinserted} back into \eqref{equ-FirstIntegralEx},
	\begin{equation}
	\psi(x)={_aD_x^{-(\mu+\sigma)}}K_\sigma
	\end{equation}
	follows from the semigroup property of R-L integral operators. 
	
	This completes the whole proof.
\end{proof}

%\section*{References}

\bibliography{mybibfile}

\end{document}